\newtheorem{theorem}{Theorem}[section]
\newtheorem{lemma}{Lemma}[section]
\newtheorem{conjecture}{Conjecture}[section]
\newtheorem{prop}{Proposition}[section]
\newtheorem{claim}{Claim}
\newtheorem{definition}{Definition}[section]
\theoremstyle{definition}
    \title{\bf Graphs with nonnegative Bakry-\'Emery curvature without Quadrilateral}
\author{Huiqiu Lin,\ Zhe You\setcounter{footnote}{-1}\footnote{\emph{Email address:} huiqiulin@126.com(H. Lin),\ y30231280@mail.ecust.edu.cn (Z. You)}\\[2mm]
	\small School of Mathematics, East China University of Science and Technology, \\
	\small  Shanghai 200237, P. R. China
}
\date{}
\begin{document}
	\maketitle
	\begin{abstract}
The definition of Ricci curvature on graphs in Bakry-\'Emery's sense based on curvature dimension condition was introduced by Lin and Yau [\emph{Math. Res. Lett.}, 2010].
Hua and Lin [\emph{Comm. Anal. Geom.}, 2019] classified unweighted graphs satisfying the curvature dimension condition $CD(0,\infty)$ whose girth are at least five. 
In this paper, we classify all of connected unweighted normalized $C_4$-free graphs satisfying curvature dimension condition $CD(0,\infty)$ for minimum degree at least 2 
and the case with non-normalized Laplacian without degree condition.

		\par\vspace{2mm}
		
		\noindent{\bfseries Keywords:} Bakry-\'Emery curvature, $C_4$-free graphs, Curvature dimension condition
		\par\vspace{1mm}
		
		\noindent{\bfseries }
	\end{abstract}

\section{Introduction}\label{section::1}
 Let $G=(V,E)$ be a undirected locally finite simple connected graph and $V,E$ respectively denote the set of vertices and edges.
Let $d_x$ denote the number of edges incident to $x$. We can also assign a weight $m_x$ to each vertex $x$ and a weight $\mu_{xy}$ to each edge $xy$, then $G=(V,E,m,u)$ becomes a weighted graph. The graph $G$ is called \emph{unweighted} if $\mu\equiv 1$ on $E$. For any $x\in V$, we denote $\mu_x:=\sum_{y \sim x} \mu_{x y}$.

We are mostly interested in functions defined on $V$, and denote by $C(V)$ the set of all such functions. For any weighted graph $G$, there is a Laplacian operator, $\Delta: C(V) \rightarrow C(V)$, defined as
$$
\Delta f(x)=\frac{1}{m_x} \sum_{y \sim x} \mu_{x y}(f(y)-f(x)), \quad f \in C(V), x \in V .
$$

The weights $\mu$ and $m$ play important roles in the definition of Laplacian. Given the weight $\mu$ on $E$, typical choices of $m$ are of interest:
\begin{itemize}
\item In case of $m_x=\mu_x$ for all $x \in V$, we call the Laplacian the normalized Laplacian.
\item In case of $m \equiv 1$ on $V$, the Laplacian is called non-normalized (or physical) Laplacian.
\end{itemize}
Moreover, if the graph is unweighted, the corresponding Laplacian is called \textit{unweighted normalized} (i.e. $\mu \equiv 1$ on $E$ and $m \equiv \mu$ on $V$) or \textit{unweighted non-normalized Laplacian} (i.e. $\mu \equiv 1$ on $E$ and $m \equiv 1$ on $V$) respectively. For simplicity, we also call the graph unweighted normalized or unweighted non-normalized graph accordingly.

We denote by $\ell^p(V, m)$ (or simply $\ell_m^p$) the space of $\ell^p$ summable functions on the discrete measure space $(V, m)$ and by $\|\cdot\|_{\ell_m^p}$ the $\ell^p$ norm of a function.

Define the \textit{weighted vertex degree} $\mathrm{D}: V \rightarrow[0, \infty)$ by
$$
\mathrm{D}_x=\frac{1}{m_x} \sum_{y \sim x} \mu_{x y}, \quad x \in V .
$$

It is well known that the Laplacian associated with the graph $G$ is a bounded operator on $\ell_m^2$ if and only if $\sup _{x \in V} \mathrm{D}_x<\infty$.

 As we all know, the Ricci curvature plays a preeminent role in geometric analysis, which has been introduced to general metric measure spaces by Bakry-\'Emery \cite{Bakry}, Ollivier \cite{Ollivier}, Lott-Villani \cite{LV} and Sturm \cite{St1,St2}, etc. Lin, Lu and Yau \cite{LLY} modified Ollivier's definition and introduce a Ricci curvature on graphs. The present paper is devoted to the investigation of the Ricci curvature in  Bakry-\'Emery's sense on graphs that comes from the curvature dimension condition.

The curvature dimension condition $CD(K, N)$, for $K \in \mathbb{R}$ and $N \in(0, \infty]$, on graphs was introduced by \cite{LY}, which serves as the combination of a lower bound $K$ for the Ricci curvature and an upper bound $N$ for the dimension, see section \ref{section2}. Bakry-\'Emery curvature and curvature dimension condition on graphs have been investigated by \cite{Cushing1,Cushing2,Cushing,Liu,Hua1,Hua2,Hua3,Jost,KK,LM,Lin,girth5,LY,LMP1,LMP2,LP,L}.

   Bakry-\'Emery curvature on each vertex of a given graph with a given dimension can be calculated by \cite{Liu}, which provided an useful app for
the computation of discrete curvature at

\centerline{http://www.mas.ncl.ac.uk/graph-curvature/}

Here are some notations. For $x\in V(G)$, let $N(x)$ denote the neighbourhood of $x$ in $G$. The girth of a vertex $x$ in $G$ denoted by $Gir(x)$, is
 defined as the minimal length of cycles passing through $x$ (If there is no cycle passing through $x$ then we define $Gir(x) = \infty$). The girth of $G$ is the minimum length of cycles in $G$.
 Let $\delta(G)$ and $\Delta(G)$ denote the minimum and maximum degree of $G$.
The punctured $2$-ball $\mathring{B}_2(x)=B_2(x)-\{x\}$ denotes the subgraph containing all spherical edges of $S_1(x)$ and all radial edges between $S_1(x)$ and the 2-sphere
 $S_2(x)$ (but not the radial edges of $S_2(x)$, since they have no influence on the curvature function at $x$). Here
 $$S_k(x):=\{y|d(x,y)=k\}\text{, }B_k(x):=\{y|d(x,y)\leq k\}.$$
 $d(x,y)$ is the combinatorial distance between $x$ and $y$.

\section{Preliminaries}\label{section2}
Before introducing Bakry-\'Emery curvature on graphs, some definitions are needed.
\begin{definition}[$\Gamma$ and $\Gamma_2$ operators]
    Let $G=(V, E)$ be a locally finite simple graph. For any two functions $f, g: V \rightarrow \mathbb{R}$, we define
$$
\begin{aligned}
2 \Gamma(f, g) & :=\Delta(f g)-f \Delta g-g \Delta f, \\
2 \Gamma_2(f, g) & :=\Delta \Gamma(f, g)-\Gamma(f, \Delta g)-\Gamma(\Delta f, g) .
\end{aligned}
$$
\end{definition}

For convenience, we write $\Gamma(f):=\Gamma(f, f)$ and $\Gamma_2(f, f):=\Gamma_2(f)$.

Now we can give the definition of Bakry-\'Emery curvature.
\begin{definition}[Bakry-\'Emery curvature]\label{def2.2}
     Let $G=(V, E)$ be a locally finite simple graph. Let $K\in \mathbb{R}$ and $N \in(0, \infty]$. We say that a vertex $x \in V$ satisfies the curvature dimension condition $CD(K,N,x)$, if for any $f\in C(V)$, we have
$$\Gamma_2(f)(x) \geq \frac{1}{N}(\Delta f(x))^2+K \Gamma(f)(x) .$$
We call $K$ a lower Ricci curvature bound of $x$, and $N$ a dimension parameter. The graph $G=(V, E)$ satisfies $C D(K, N)$, if all the vertices satisfy $CD(K, N)$. At a vertex $x \in V$, let $K(G, x ; N)$ be the largest $K$ such that curvature dimension condition holds for all functions $f$ at $x$ for a given $N$. We call $K(G, x ; N)$ the Bakry-\'Emery curvature function of $x$.
\end{definition}

Liu and Peyerimhoff \cite{Cushing} showed that $K(G, x ; N)$ is a non-decreasing function on $N$. Hence, there are more graphs in the situation of dimension $\infty$ than others when considering graphs with non-negative Bakry-\'Emery curvature. That's why we only need to classify graphs in the case of dimension $\infty$ in this paper.
Actually, Bakry-\'Emery curvature comes from Riemannian geometry. Let $(M,\langle\cdot, \cdot\rangle)$ be a Riemannian manifold of dimension $N$ with the Laplacian defined via $\Delta=$ div $\circ$ grad $\leq 0$.
Famous Bochner's formula tell us
$$
\frac{1}{2} \Delta|\operatorname{grad} f|^2(x)=|\operatorname{Hess} f|^2(x)+\langle\operatorname{grad} \Delta f(x), \operatorname{grad} f(x)\rangle+\operatorname{Ric}(\operatorname{grad} f(x)) \text {, }
\forall f \in C^{\infty}(M),$$
where Hess and Ric respectively denote the Hessian and the Ricci tensor. If $\operatorname{Ric}(v) \geq K_x|v|^2$ for all $v \in T_x M$ (tangent space at $x$) and, using the inequality $|\operatorname{Hess} f|^2(x) \geq \frac{1}{N}(\Delta f(x))^2$, we obtain
$$
\frac{1}{2} \Delta|\operatorname{grad} f|^2(x)-\langle\operatorname{grad} \Delta f(x), \operatorname{grad} f(x)\rangle \geq \frac{1}{n}(\Delta f(x))^2+K_x|\operatorname{grad} f(x)|^2 .
$$

The Bakry-\'Emery calculus \cite{Bakry} $\Gamma$ and $\Gamma_2$ for two functions $f, g \in C^{\infty}(M)$ are defined as
$$
\begin{aligned}
2 \Gamma(f, g) & :=\Delta(f g)-f \Delta g-g \Delta f=\langle\operatorname{grad} f, \operatorname{grad} g\rangle, \\
2 \Gamma_2(f, g) & :=\Delta \Gamma(f, g)-\Gamma(f, \Delta g)-\Gamma(g, \Delta f) .
\end{aligned}
$$

Here $\Gamma_2(f)=\frac{1}{2} \Delta|\operatorname{grad} f|^2-\langle\operatorname{grad} \Delta f$, grad $f\rangle$, and we obtain the curvature dimension inequality $\Gamma_2(f)(x) \geq \frac{1}{N}(\Delta f(x))^2+K_x \Gamma(f)(x)$ by using Bochner's formula. Therefore, an $N$-dimensional Riemannian manifold $(M,\langle\cdot, \cdot\rangle)$ with Ricci curvature bounded below by $K_x$ at $x \in M$ satisfies an inequality of the same form given in Definition \ref{def2.2}. It's the motivation to define a notion of Ricci curvature for a metric space, including graphs \cite{LY}, by using this inequality.

Analogous to the case on Riemannian manifolds, Hua and Lin \cite{Hua3} give the Bochner type identity on graphs.

 \begin{prop}[Bochner type identity  \cite{Hua3}]\label{prop2.1}
 For any function $f$ and $x \in V$,
$$
\begin{aligned}
\Gamma_2(f)(x)= & \frac{1}{4}\left|D^2 f\right|^2(x)+\frac{1}{2}(\Delta f(x))^2 \\
& -\frac{1}{4} \sum_{y \sim x} \frac{\mu_{x y}}{m_x}\left(\mathrm{D}_x+\mathrm{D}_y\right)|f(y)-f(x)|^2,
\end{aligned}
$$
where
$$
\left|D^2 f\right|^2(x):=\sum_{\substack{y, z \in V \\ y \sim x, z \sim y}} \frac{\mu_{x y} \mu_{y z}}{m_x m_y}|f(x)-2 f(y)+f(z)|^2 .
$$
 \end{prop}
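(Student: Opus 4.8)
The plan is to verify the identity by directly expanding both sides in terms of the edge increments $f(y)-f(x)$. First I would record the two elementary pointwise formulas that follow at once from the definitions of $\Gamma$ and $\Delta$: for all $f,g\in C(V)$ and $x\in V$,
$$
\Gamma(f)(x)=\frac{1}{2m_x}\sum_{y\sim x}\mu_{xy}\big(f(y)-f(x)\big)^2,\qquad 2\Gamma(f,g)(x)=\frac{1}{m_x}\sum_{y\sim x}\mu_{xy}\big(f(y)-f(x)\big)\big(g(y)-g(x)\big).
$$
The second comes from substituting the definition of $\Delta$ into $2\Gamma(f,g)=\Delta(fg)-f\Delta g-g\Delta f$ and regrouping, and the first is the case $g=f$. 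Since $\Gamma$ is symmetric, the definition of $\Gamma_2$ gives $2\Gamma_2(f)(x)=\Delta\Gamma(f)(x)-2\Gamma(f,\Delta f)(x)$, so it suffices to express $\Delta\Gamma(f)(x)$ and $\Gamma(f,\Delta f)(x)$ through these formulas and match the outcome with the stated right-hand side.

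Next I would expand $\left|D^2 f\right|^2(x)$. Writing $f(x)-2f(y)+f(z)=\big(f(z)-f(y)\big)-\big(f(y)-f(x)\big)$ and squaring produces three groups of terms; carrying out the inner sum over $z\sim y$ and using $\sum_{z\sim y}\mu_{yz}(f(z)-f(y))^2=2m_y\Gamma(f)(y)$, $\sum_{z\sim y}\mu_{yz}(f(z)-f(y))=m_y\Delta f(y)$, and $\sum_{z\sim y}\mu_{yz}=m_y\mathrm{D}_y$ collapses the double sum to
$$
\left|D^2 f\right|^2(x)=\sum_{y\sim x}\frac{\mu_{xy}}{m_x}\Big(2\Gamma(f)(y)-2\big(f(y)-f(x)\big)\Delta f(y)+\mathrm{D}_y\big(f(y)-f(x)\big)^2\Big).
$$

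Then I would identify the three resulting sums. The first equals $2\big(\Delta\Gamma(f)(x)+\mathrm{D}_x\Gamma(f)(x)\big)$, since $\frac{1}{m_x}\sum_{y\sim x}\mu_{xy}\Gamma(f)(y)=\Delta\Gamma(f)(x)+\mathrm{D}_x\Gamma(f)(x)$. For the middle one, adding and subtracting $\Delta f(x)$ and using $\frac{1}{m_x}\sum_{y\sim x}\mu_{xy}(f(y)-f(x))=\Delta f(x)$ rewrites it as $2\Gamma(f,\Delta f)(x)+(\Delta f(x))^2$. Finally I would move the $\mathrm{D}_x$-contribution $2\mathrm{D}_x\Gamma(f)(x)=\sum_{y\sim x}\frac{\mu_{xy}}{m_x}\mathrm{D}_x(f(y)-f(x))^2$ out of the first sum so it merges with the $\mathrm{D}_y$-term into $\sum_{y\sim x}\frac{\mu_{xy}}{m_x}(\mathrm{D}_x+\mathrm{D}_y)(f(y)-f(x))^2$. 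Substituting everything back gives
$$
\left|D^2 f\right|^2(x)=2\big(\Delta\Gamma(f)(x)-2\Gamma(f,\Delta f)(x)\big)-2(\Delta f(x))^2+\sum_{y\sim x}\frac{\mu_{xy}}{m_x}\big(\mathrm{D}_x+\mathrm{D}_y\big)\big(f(y)-f(x)\big)^2,
$$
and recognizing the first bracket as $4\Gamma_2(f)(x)$, a rearrangement and division by $4$ yields the asserted identity.

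I do not anticipate a genuine obstacle, since every step is forced: the only delicate point is the bookkeeping in the last two paragraphs, where a misplaced factor of $2$ or a sign slip when splitting off $(\Delta f(x))^2$ and the $\mathrm{D}_x$-term would destroy the balance of constants. As a sanity check I would evaluate both sides on a small graph (such as a short path or a complete graph) for an explicit $f$ to confirm the coefficients $\tfrac14$, $\tfrac12$ and $\tfrac14$.
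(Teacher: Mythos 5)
Your computation is correct and complete: the expansion of $|D^2f|^2(x)$ via $f(x)-2f(y)+f(z)=(f(z)-f(y))-(f(y)-f(x))$, the identification of the three resulting sums with $\Delta\Gamma(f)(x)+\mathrm{D}_x\Gamma(f)(x)$, $2\Gamma(f,\Delta f)(x)+(\Delta f(x))^2$ and the degree term, and the final rearrangement all check out with the right constants. The paper itself does not prove this proposition but quotes it from Hua--Lin \cite{Hua3}, where it is established by essentially this same direct verification, so your argument is the standard one.
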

Lin, Lu and Yau \cite{girth5} classified all the Ricci-flat graphs with girth at least $5$ via Lin-Lu-Yau Ricci curvature. Hua and Lin \cite{Hua3} consider similar problems via Bakry-\'Emery curvature. They classified all the graphs satisfying $CD(0,\infty)$.
\begin{figure}[h]
    \centering
    \includegraphics[width=0.8\textwidth,height=0.2\textwidth]{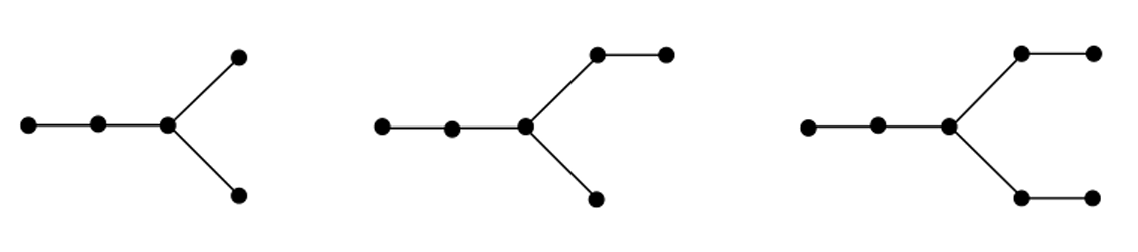}
    \caption{$Star^1_3$, $Star^2_3$ and $Star^3_3$}
    \label{fig1}
\end{figure}
\begin{theorem}[\cite{Hua3}]\label{thm2.1}
     Let $G$ be an unweighted normalized graph with $\inf_{\substack{x \in V}}d_x\geq 2$ and $Gir\left(x_0\right) \geq 5$ for some $x_0 \in V$. Then $G$ satisfies the $CD(0, \infty)$ condition if and only if $G$ is either the infinite line $P_{\mathbb{Z}}$ or the cycle graphs $C_n$ for $n \geq 5$.
\end{theorem}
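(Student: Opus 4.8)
The plan is to treat the two implications separately, using the Bochner-type identity of Proposition~\ref{prop2.1} as the main tool and exploiting that at a vertex $x$ with $Gir(x)\ge5$ the set $S_1(x)$ is independent and every $z\in S_2(x)$ has exactly one neighbour in $S_1(x)$. For the ``if'' direction I would fix a vertex $x$ of $P_{\mathbb{Z}}$ or of $C_n$ with $n\ge5$; then $d_x=2$, $S_1(x)=\{y_1,y_2\}$ with $y_1\not\sim y_2$, each $y_i$ has a unique further neighbour $z_i\in S_2(x)$, and the only possible extra edge $z_1z_2$ (present exactly when $n=5$) is spherical in $S_2(x)$ and hence irrelevant to the curvature at $x$. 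Since $\mathrm D_x=\mathrm D_{y_i}=1$ and the only $z\sim y_i$ with $z\in\{x\}\cup S_1(x)$ is $z=x$, a direct substitution into Proposition~\ref{prop2.1} should collapse $\Gamma_2(f)(x)$ to a manifest sum of squares,
$$
\Gamma_2(f)(x)=\tfrac1{16}\bigl(f(x)-2f(y_1)+f(z_1)\bigr)^2+\tfrac1{16}\bigl(f(x)-2f(y_2)+f(z_2)\bigr)^2+\tfrac12\bigl(\Delta f(x)\bigr)^2\ \ge\ 0 ,
$$
giving $CD(0,\infty)$ (indeed $CD(0,2)$) at every vertex.

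For the ``only if'' direction, assume $G$ satisfies $CD(0,\infty)$ with $\delta(G)\ge2$ and $Gir(x_0)\ge5$. I would first prove a local rigidity statement: for \emph{any} vertex $x$ with $Gir(x)\ge5$, every neighbour of $x$ has degree $2$. Writing $y_1,\dots,y_d$ for the neighbours of $x$ ($d=d_x\ge\delta(G)\ge2$) with degrees $d_i=d_{y_i}$, in Proposition~\ref{prop2.1} each value $f(z)$, $z\in S_2(x)$, occurs in exactly one square $|f(x)-2f(y)+f(z)|^2$, so after normalizing $f(x)=0$, minimizing over those free values (take $f(z)=2f(y)$), setting $a_i=f(y_i)$, and multiplying by $2d$, the condition $\Gamma_2(f)(x)\ge0$ reduces to
$$
2\sum_{i=1}^{d}\frac{a_i^2}{d_i}+\frac1d\Bigl(\sum_{i=1}^{d}a_i\Bigr)^2-\sum_{i=1}^{d}a_i^2\ \ge\ 0\qquad\text{for all }a\in\mathbb{R}^{d} .
$$
Testing with $a$ supported on two neighbours $y_j,y_k$, $a_j=1$, $a_k=-1$, annihilates the middle term and forces $\tfrac1{d_j}+\tfrac1{d_k}\ge1$; combined with $d_j,d_k\ge\delta(G)\ge2$ this yields $d_j=d_k=2$, and letting $\{y_j,y_k\}$ range over all pairs of neighbours proves the claim.

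To globalize, put $S=\{v\in V:Gir(v)\ge5\}$, so $x_0\in S$. If $v\in S$ and $u\sim v$, the local step gives $d_u=2$, and moreover $Gir(u)\ge5$: a $3$- or $4$-cycle through the degree-$2$ vertex $u$ must pass through both neighbours of $u$ (one of which is $v$) and would thus exhibit a triangle or a quadrilateral through $v$, contradicting $v\in S$. Hence $S$ is closed under passing to neighbours, so $S=V$ by connectedness, and then every vertex, being adjacent to some member of $S=V$, has degree $2$ by the local step; so $G$ is connected and $2$-regular with all girths at least $5$, i.e.\ $G$ is $P_{\mathbb{Z}}$ or $C_n$ with $n\ge5$. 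The routine part is the algebra reducing $CD(0,\infty,x)$ to the displayed scalar inequality, together with the observation that under $\delta(G)\ge2$ the coefficient $\tfrac2{d_i}-1$ is nonpositive and strictly negative once $d_i\ge3$, which is exactly what forbids a high-degree neighbour. The step I expect to need the most care is the girth bookkeeping in the globalization: since $x_0$ is assumed only to have large girth, one must verify that passing to a neighbour preserves \emph{both} ``degree $2$'' and ``girth $\ge5$'', and it is precisely this conjunction that lets the local rigidity spread to the whole component.
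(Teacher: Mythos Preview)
The paper does not give its own proof of Theorem~\ref{thm2.1}; it is quoted verbatim from Hua and Lin~\cite{Hua3} as background, so there is nothing in the present paper to compare your argument against directly. That said, your proof is correct and is very much in the spirit of the Hua--Lin method (and of what the present paper does in Section~\ref{section::4} for the $C_4$-free case): use Proposition~\ref{prop2.1}, normalize $f(x)=0$, exploit that under $Gir(x)\ge5$ each $z\in S_2(x)$ has a unique neighbour in $S_1(x)$ so that the $S_2$-values can be optimized away, and reduce $CD(0,\infty,x)$ to a quadratic inequality in the values $a_i=f(y_i)$. Your test vector $a_j=1$, $a_k=-1$ kills the cross term and isolates $\tfrac{1}{d_j}+\tfrac{1}{d_k}\ge1$, forcing $d_j=d_k=2$ under $\delta(G)\ge2$; this is exactly the mechanism Hua--Lin use. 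The globalization step you give (a cycle through a degree-$2$ vertex must use both of its neighbours, so large girth propagates along edges) is the standard one and is correct. Your ``if'' computation is also right and, as you note, actually yields $CD(0,2)$.
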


\begin{theorem}[\cite{Hua3}]\label{thm2.2}
     Let $G$ be an unweighted normalized graph with girth at least 5. Then $G$ satisfies the $CD(0, \infty)$ condition if and only if $G$ is one of the following:\\
(1) The path graphs $P_k(k \geq 1)$, the cycle graphs $C_n(n \geq 5)$, the infinite line $P_{\mathbb{Z}}$, or the infinite half line $P_{\mathbb{N}}$,\\
(2) The star graphs $Star_n(n \geq 3)$ (one vertex is joined to other $n-1$ vertices), or $Star_3^i(1 \leq i \leq 3)$,
where $Star_3^i$ is the 3 -star graph with $i$ edges added, $1 \leq i \leq 3$, shown in figure \ref{fig1}.
\end{theorem}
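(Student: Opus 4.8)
The plan is to localise the curvature condition. Since $G$ has girth at least $5$, no $C_3$ or $C_4$ passes through any vertex $x$, so the part of $B_2(x)$ that enters the curvature at $x$ — namely $x$, its neighbours $N(x)$, and a pendant edge from each neighbour $y$ to each of its $d_y-1$ other neighbours (these lying in $S_2(x)$, pairwise distinct) — forms a rooted tree of depth two. Feeding this into the Bochner-type identity of Proposition~\ref{prop2.1} turns $CD(0,\infty,x)$ into a small positive-semidefiniteness condition, and both directions of the theorem then follow from a short degree analysis. \textbf{Step 1 (local criterion).} Write $d=d_x$ and $N(x)=\{y_1,\dots,y_d\}$. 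Since $G$ is normalized, $\mathrm D\equiv1$, so Proposition~\ref{prop2.1} reads
$$\Gamma_2(f)(x)=\tfrac14|D^2f|^2(x)+\tfrac12(\Delta f(x))^2-\tfrac1{2d}\sum_{i=1}^{d}\big(f(y_i)-f(x)\big)^2 .$$
Normalising $f(x)=0$ and setting $a_i=f(y_i)$, the values $f(z)$ for $z\in S_2(x)$ enter $|D^2f|^2(x)$ only via single terms $\tfrac1{d\,d_{y_i}}\big(f(z)-2a_i\big)^2$, hence in $\min_f\Gamma_2(f)(x)$ we may take $f(z)=2a_i$; a direct computation then yields that $CD(0,\infty,x)$ holds iff
$$A_x:=\operatorname{diag}\!\Big(\tfrac2{d_{y_1}}-1,\dots,\tfrac2{d_{y_d}}-1\Big)+\tfrac1d\,\mathbf 1\mathbf 1^{\top}\succeq0 .$$
Examining the diagonal entries and the $2\times2$ minors of $A_x$, I would record as a lemma: (i) a leaf always satisfies $CD(0,\infty)$; (ii) a vertex of degree $2$ with neighbour-degrees $p,q$ satisfies it iff $p+q\le4$; (iii) a vertex of degree $\ge3$ satisfying it has all neighbours of degree $\le2$, and if its degree is $\ge4$ then — applying (ii) at each neighbour — all its neighbours are leaves.

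\textbf{Step 2 (sufficiency).} One checks (i)--(iii) on each graph in the list. For $P_k$, $P_{\mathbb N}$, $P_{\mathbb Z}$, $C_n$ ($n\ge5$) every non-leaf has degree $2$ with neighbour-degree sum $\le4$; for $Star_n=K_{1,n-1}$ the centre has only leaf neighbours and $\operatorname{diag}(1,\dots,1)+\tfrac1{n-1}\mathbf 1\mathbf 1^{\top}\succeq0$; for $Star_3^i$ the centre has degree $3$ with neighbour-degrees in $\{1,2\}$ and each degree-$2$ neighbour has neighbour-degrees $3$ and $1$. (For $P_{\mathbb Z}$ and $C_n$ one may instead quote Theorem~\ref{thm2.1}.)

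\textbf{Step 3 (necessity).} Let $G$ be connected with girth $\ge5$ and $CD(0,\infty)$. If $\delta(G)=0$, then $G=P_1$. If $\delta(G)\ge2$, then $Gir(x_0)\ge5$ for every $x_0\in V$, so Theorem~\ref{thm2.1} gives $G=P_{\mathbb Z}$ or $G=C_n$ ($n\ge5$). So assume $\delta(G)=1$. If some vertex $x$ has $d_x\ge4$, then by (iii) all its neighbours are leaves and $G=K_{1,d_x}=Star_{d_x+1}$. If $\Delta(G)\le2$, then $G$ is a connected graph of maximum degree $\le2$ with a leaf, hence $G=P_k$ ($k\ge2$) or $G=P_{\mathbb N}$. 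If $\Delta(G)=3$, take $x$ with $d_x=3$; by (iii) each neighbour $y_i$ has degree $\le2$, and if $d_{y_i}=2$ then by (ii) at $y_i$ its other neighbour is a leaf. Hence every vertex lies within distance $2$ of $x$, $x$ is the unique vertex of degree $3$, and $G$ is the spider with centre $x$ and three legs of length $1$ or $2$: either $K_{1,3}=Star_4$ or one of $Star_3^1,Star_3^2,Star_3^3$. These cases together give exactly the list.

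\textbf{Main obstacle.} The step requiring the most care is the derivation of the criterion $A_x\succeq0$ in Step~1: one must verify that the $S_2(x)$-values of the test function genuinely decouple from all remaining variables — which is precisely where girth $\ge5$ (no $C_3,C_4$ through $x$) is used — so that the minimisation over all $f$ collapses to a $d_x$-dimensional linear-algebra problem. After that, Steps~2 and~3 are elementary, the one delicate point being to confirm that the degree analysis of Step~3 leaves no room for any exotic, possibly infinite, graph.
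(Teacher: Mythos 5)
This theorem is quoted from \cite{Hua3}; the paper contains no proof of it, so there is no internal argument to compare against, and your proposal must be judged on its own merits, where it is correct. Your reduction --- normalise $f(x)=0$, use girth $\geq 5$ to see that each $z\in S_2(x)$ has a unique neighbour in $S_1(x)$ so the optimal choice $f(z)=2f(y)$ decouples, and collapse $CD(0,\infty,x)$ to positive semidefiniteness of $\operatorname{diag}\bigl(2/d_{y_1}-1,\dots,2/d_{y_d}-1\bigr)+\tfrac{1}{d_x}\mathbf{1}\mathbf{1}^{\top}$ --- is precisely the technique the authors themselves deploy in Section \ref{section::4} for Theorem \ref{thm4.1}, and your degree lemma together with the appeal to Theorem \ref{thm2.1} in the case $\delta(G)\geq 2$ exhausts all cases; the one genuinely delicate point, that for a degree-$3$ vertex the diagonal entry alone only forces neighbour degrees $\leq 3$ and a $2\times 2$ minor is needed to push this to $\leq 2$, is correctly anticipated in your statement of item (iii).
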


\begin{theorem}[\cite{Hua3}]\label{thm2.3}
     Let $G$ be an unweighted non-normalized graph with $\inf_{x \in V} \mathrm{~d}_x \geq 2$ and $Gir\left(x_0\right) \geq 5$ for some $x_0 \in V$. Then $G$ satisfies the $CD(0, \infty)$ condition if and only if $G$ is either the infinite line $P_{\mathbb{Z}}$ or the cycle graphs $C_n$ for $n \geq 5$.
\end{theorem}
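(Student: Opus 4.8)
The plan is to reduce $CD(0,\infty,x)$ to the positive semidefiniteness of an explicit small quadratic form and then to bootstrap local rigidity over the whole graph. Since the Laplacian is non-normalized, $m_x\equiv1$ and $\mathrm D_x=d_x$, so Proposition \ref{prop2.1} together with the translation invariance $\Gamma_2(f+c)=\Gamma_2(f)$ lets me assume $f(x)=0$ and write
\[
\Gamma_2(f)(x)=\frac14\sum_{y\sim x}\sum_{z\sim y}|f(z)-2f(y)|^2+\frac12\Big(\sum_{y\sim x}f(y)\Big)^2-\frac14\sum_{y\sim x}(d_x+d_y)f(y)^2 .
\]
If $Gir(x)\ge5$ then $\mathring{B}_2(x)$ is tree-like: the neighbours of $x$ are pairwise non-adjacent and each $z\in S_2(x)$ has a unique neighbour in $S_1(x)$, so every value $f(z)$ with $z\in S_2(x)$ appears in exactly one summand $|f(z)-2f(y)|^2$ and can be set to $2f(y)$ to kill it. Performing this minimisation over $f|_{S_2(x)}$ and writing $N(x)=\{y_1,\dots,y_{d_x}\}$, $a_i=f(y_i)$, the condition $CD(0,\infty,x)$ becomes equivalent to the positive semidefiniteness of
\[
Q_x(a)=\sum_{i=1}^{d_x}\Big(1-\frac{d_x+d_{y_i}}{4}\Big)a_i^2+\frac12\Big(\sum_{i=1}^{d_x}a_i\Big)^2 .
\]

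Next I would extract degree constraints from $Q_x\succeq0$. Because $\delta(G)\ge2$, the coefficient $1-\tfrac{d_x+d_{y_i}}{4}\le\tfrac{2-d_x}{4}$, which is strictly negative once $d_x\ge3$; evaluating $Q_x$ at $a=(1,-1,0,\dots,0)$ then yields a negative value, contradicting $Q_x\succeq0$. Hence every vertex of girth at least $5$ has degree exactly $2$; in particular $d_{x_0}=2$. When $d_x=2$, a direct computation with the $2\times2$ Gram matrix of $Q_x$ (diagonal entries $\tfrac{4-d_{y_1}}{4},\tfrac{4-d_{y_2}}{4}$, off-diagonal entry $\tfrac12$) shows $Q_x\succeq0$ iff $d_{y_1},d_{y_2}\le4$ and $(4-d_{y_1})(4-d_{y_2})\ge4$; since $\delta(G)\ge2$ forces $4-d_{y_i}\le2$, this is possible only when $d_{y_1}=d_{y_2}=2$. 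So both neighbours of $x_0$ again have degree $2$.

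The key remaining step is to propagate the property ``$d_x=2$ and $Gir(x)\ge5$'' along edges. I would check by elementary cycle chasing that if $d_x=2$, $Gir(x)\ge5$, and the two neighbours of $x$ have degree $2$, then for a neighbour $y$ of $x$ a triangle through $y$ would force an edge from $x$ to the second neighbour of $y$, while a quadrilateral through $y$ would force the second neighbours of the two neighbours of $x$ to coincide; either case produces a $3$- or $4$-cycle through $x$, contradicting $Gir(x)\ge5$, so $Gir(y)\ge5$ as well. Therefore the set $A:=\{x\in V:\ d_x=2,\ Gir(x)\ge5\}$ contains $x_0$ and is closed under passing to neighbours, whence $A=V$ by connectedness; so $G$ is a connected $2$-regular simple graph of girth at least $5$, i.e.\ $G=C_n$ for some $n\ge5$ or $G=P_{\mathbb Z}$. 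For the converse direction, at every vertex of such a graph $Q_x(a)=\tfrac12(a_1+a_2)^2\ge0$, so $CD(0,\infty)$ holds. I expect the main difficulty to lie not in the curvature identity but in making the propagation watertight, together with the observation that the $S_2(x)$-minimisation is legitimate precisely at the girth-$\ge5$ vertices — which is exactly why carrying the girth condition along the graph is the essential move.
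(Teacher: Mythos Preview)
The paper does not prove Theorem~\ref{thm2.3}; it is quoted from \cite{Hua3} as background, so there is no in-paper argument to compare against. That said, your proof is correct and is exactly in the spirit of the machinery the paper imports from \cite{Hua3}: you use Proposition~\ref{prop2.1} to reduce $CD(0,\infty,x)$ at a girth-$\ge 5$ vertex to positive semidefiniteness of the explicit form $Q_x$, and then force $d_{x_0}=2$ and $d_{y_1}=d_{y_2}=2$ before propagating.

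Two small remarks to tighten the write-up. First, when you minimise over $f|_{S_2(x)}$, say explicitly that the inner sum $\sum_{z\sim y}$ also contains the term $z=x$, which contributes $4f(y)^2$ and is the source of the ``$+1$'' in the diagonal coefficient $1-\tfrac{d_x+d_{y_i}}{4}$; as written this step is compressed. Second, in the propagation step your cycle-chasing is correct but could be stated more directly: since $d_y=2$ with $N(y)=\{x,w\}$, any triangle or $4$-cycle through $y$ must use both edges $yx$ and $yw$; a triangle then forces $w\in N(x)=\{y_1,y_2\}$, giving a triangle through $x$, and a $4$-cycle forces a common neighbour of $x$ and $w$ in $N(x)\setminus\{y\}=\{y_2\}$, i.e.\ $w\sim y_2$, hence $w=w_2$ and $x\,y_1\,w\,y_2\,x$ is a $4$-cycle through $x$. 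With these two clarifications the argument is complete.
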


 \begin{theorem}[\cite{Hua3}]\label{thm2.4}
     Let $G$ be an unweighted non-normalized graph with girth at least five. Then $G$ satisfies the $CD(0, \infty)$ condition if and only if $G$ is one of the following:\\
(1) $P_k(k \geq 1), C_n(n \geq 5), P_{\mathbb{Z}}$, or $P_{\mathbb{N}}$,\\
(2) $Star_3$.
 \end{theorem}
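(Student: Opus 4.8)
The plan is to read off both implications directly from the Bochner type identity of Proposition \ref{prop2.1}, specialised to the unweighted non-normalised case where $\mu\equiv1$, $m\equiv1$, so that $\mathrm{D}_x=d_x$ and
\[
\Gamma_2(f)(x)=\frac14\left|D^2 f\right|^2(x)+\frac12\bigl(\Delta f(x)\bigr)^2-\frac14\sum_{y\sim x}(d_x+d_y)\,|f(y)-f(x)|^2,\qquad \left|D^2 f\right|^2(x)=\sum_{y\sim x}\sum_{z\sim y}|f(x)-2f(y)+f(z)|^2 .
\]
Two elementary reductions organise everything: $\Gamma_2(f)(x)$ depends only on $f|_{B_2(x)}$, and $CD(0,\infty,x)$ is unchanged on adding a constant to $f$, so one may always normalise $f(x)=0$; and since $Gir(G)\ge5$ there is no edge inside $S_1(x)$ and each vertex of $S_2(x)$ is adjacent to exactly one vertex of $S_1(x)$. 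Consequently the values of $f$ on $S_2(x)$ enter $\Gamma_2(f)(x)$ only through the independent squares $|{-2f(y)+f(z)}|^2$, one per $z\in S_2(x)$ attached to its unique neighbour $y\in S_1(x)$, which vanish when one takes $f(z)=2f(y)$.

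For the sufficiency direction I would compute $\min_f\Gamma_2(f)(x)$ at each vertex-type occurring in the list: a vertex of degree $1$; a degree-$2$ vertex with a degree-$1$ neighbour; a degree-$2$ vertex with two degree-$2$ neighbours (the only type in $C_n$, $n\ge5$, and in $P_{\mathbb{Z}}$); and the degree-$3$ centre of $Star_3$. After normalising $f(x)=0$ and putting $f(z)=2f(y)$ on $S_2(x)$ as above, what is left is a small explicit quadratic form in $\{f(y):y\sim x\}$, and a one-line computation shows it is a sum of squares in every case — for instance it equals $\tfrac12\bigl(\textstyle\sum_{y\sim x}f(y)\bigr)^2$ at the centre of $Star_3$ and $\tfrac12\bigl(f(y_1)+f(y_2)\bigr)^2$ at an interior degree-$2$ vertex — so every graph in (1)--(2) satisfies $CD(0,\infty)$.

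For the necessity direction, assume $G$ has $Gir(G)\ge5$ and satisfies $CD(0,\infty)$. In Step 1 I would bound the maximum degree: pick $x$ with $d_x=\Delta(G)$ and, if $\Delta(G)\ge2$, two neighbours $y_1,y_2$, and test $f(x)=0$, $f(y_1)=1$, $f(y_2)=-1$, $f\equiv0$ on the remaining neighbours, $f(z)=2f(y)$ on $S_2(x)$; then $\Delta f(x)=0$, $\left|D^2f\right|^2(x)=8$, the radial term is $\tfrac14(2d_x+d_{y_1}+d_{y_2})$, so $CD(0,\infty,x)$ forces $2d_x+d_{y_1}+d_{y_2}\le8$, hence $d_x\le3$; thus $\Delta(G)\le3$. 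In Step 2, if $\Delta(G)\le2$ then $G$ is (being connected) a path or a cycle, and $Gir(G)\ge5$ leaves precisely the graphs of (1). In Step 3, if $\Delta(G)=3$, take $x$ with $d_x=3$ and neighbours $y_1,y_2,y_3$; were some neighbour, say $y_1$, to satisfy $d_{y_1}\ge2$, then testing $f(x)=0$, $f(y_1)=2$, $f(y_2)=f(y_3)=-1$, $f(z)=2f(y)$ on $S_2(x)$ yields $\Gamma_2(f)(x)=\tfrac14\bigl(6-4d_{y_1}-d_{y_2}-d_{y_3}\bigr)\le\tfrac14(6-10)<0$, contradicting $CD(0,\infty,x)$. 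Hence every neighbour of a degree-$3$ vertex is a leaf, so the component of $x$, and therefore $G$, is exactly $K_{1,3}=Star_3$.

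The only genuinely delicate point is the choice of test functions in the necessity direction together with the verification that the prescribed values on $S_2(x)$ are mutually consistent and annihilate the corresponding $\left|D^2f\right|^2$-contributions; it is exactly the girth hypothesis that makes this work, since it forces each second neighbour to sit below a single first neighbour whose $f$-value may then be set freely. Once the local picture is frozen in this way, everything reduces to one-line quadratic estimates. As an alternative organisation, the subcase $\delta(G)\ge2$ of necessity could instead be extracted from Theorem \ref{thm2.3}, leaving only the case in which $G$ has a vertex of degree $1$; but the uniform degree argument above seems cleaner.
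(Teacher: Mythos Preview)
The paper does not prove Theorem~\ref{thm2.4}; it is quoted verbatim from \cite{Hua3} and then invoked as a black box in the proof of Theorem~\ref{thm2.5}. So there is no in-paper argument to compare your proposal against.

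That said, your argument is correct and self-contained, and it uses exactly the toolkit the present paper deploys for its own results: Proposition~\ref{prop2.1}, the normalisation $f(x)=0$, the observation that under $Gir(G)\ge5$ each $z\in S_2(x)$ has a unique neighbour in $S_1(x)$ so that setting $f(z)=2f(y)$ is consistent and kills the $S_2$-contributions to $\left|D^2f\right|^2$, and finally explicit test vectors. Your Step~1 computation gives $\Gamma_2(f)(x)=2-\tfrac14(2d_x+d_{y_1}+d_{y_2})$, forcing $\Delta(G)\le3$, and your Step~3 computation gives $\Gamma_2(f)(x)=\tfrac14\bigl(6-4d_{y_1}-d_{y_2}-d_{y_3}\bigr)$, forcing every neighbour of a degree-$3$ vertex to be a leaf; both are correct. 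One small point worth making explicit in the sufficiency direction: at a leaf $x$ with unique neighbour $y$, the minimised form is $\Gamma_2(f)(x)=\tfrac14(5-d_y)f(y)^2$, which is nonnegative because $d_y\le3$ in every listed graph --- this is not literally a ``sum of squares'' without that degree check, so you should say so rather than leave it implicit.
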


In this paper, we consider $C_4$-free graphs satisfying $CD(0,\infty)$ based on Hua and Lin's results. A graph $G$ is called $F$-free if $F$ is not a subgraph of $G$.

\begin{figure}[h]
    \centering
    \includegraphics[width=0.2\textwidth,height=0.2\textwidth]{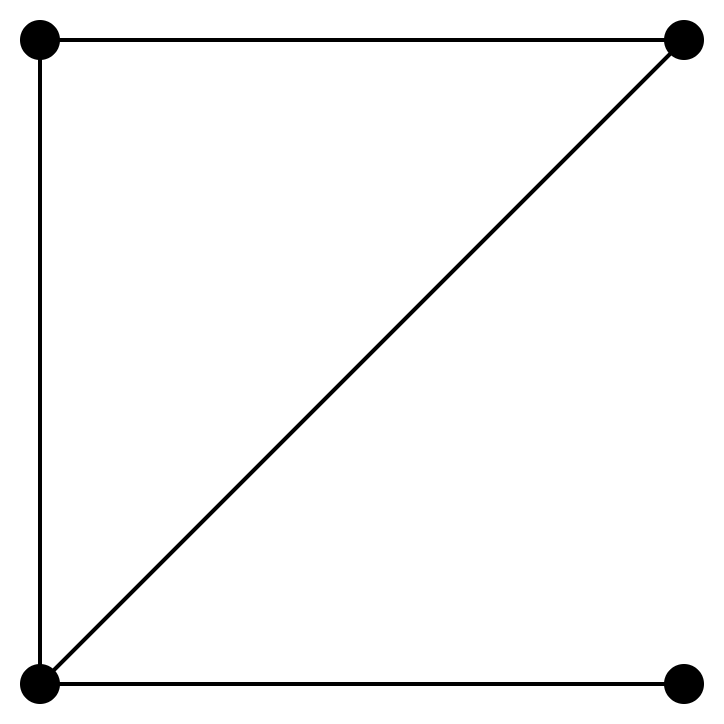}
    \caption{Graph $C'_3$}
    \label{fig2}
\end{figure}
  \begin{theorem}\label{thm2.5}
      Let $G$ be an unweighted non-normalized $C_4$-free graph. Then $G$ satisfies the $CD(0, \infty)$ condition if and only if $G$ is one of the following:\\
      (1) $P_k(k \geq 1), C_n(n \geq 5), P_{\mathbb{Z}}$, or $P_{\mathbb{N}}$,\\
      (2) $\mathrm{Star}_3$,\\
      (3) $C_3$, $C'_3$ (shown in figure \ref{fig2}).
  \end{theorem}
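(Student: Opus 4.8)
The plan is to bootstrap off Theorem~\ref{thm2.4}, using that a $C_4$-free graph has girth $3$ or at least $5$. The ``if'' direction is easy: every graph in (1) and (2) has girth at least five, so it satisfies $CD(0,\infty)$ by Theorem~\ref{thm2.4}, while $C_3$ and $C_3'$ are handled by computing the Bakry-\'Emery curvature function at each of their (finitely many) vertices directly from the Bochner identity of Proposition~\ref{prop2.1}. For the converse, let $G$ be a connected $C_4$-free graph satisfying $CD(0,\infty)$; if $G$ has girth at least five then Theorem~\ref{thm2.4} already places it in the list ($P_k$, $C_n$, $P_{\mathbb{Z}}$, $P_{\mathbb{N}}$, or $\mathrm{Star}_3$), so from now on I assume $G$ contains a triangle and aim to show $G\in\{C_3,C_3'\}$.

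First I would recast $CD(0,\infty,x)$ as a matrix-positivity condition. Normalising $f(x)=0$ and minimising $|D^2f|^2(x)$ in Proposition~\ref{prop2.1} over the values of $f$ on $S_2(x)$ leaves a quadratic form $Q_x$ in the variables $\bigl(f(y)\bigr)_{y\in S_1(x)}$, and $CD(0,\infty,x)$ holds if and only if $Q_x$ is positive semidefinite. Here $C_4$-freeness is what makes the reduction completely explicit: every vertex of $S_2(x)$ has exactly one neighbour in $S_1(x)$ (two would close a $4$-cycle through $x$), so the minimisation over $S_2(x)$ kills all contributions coming from edges between $S_1(x)$ and $S_2(x)$, and every edge of $G$ lies on at most one triangle. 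One finds that $4Q_x$ is the symmetric matrix on $\mathbb{R}^{S_1(x)}$ whose $(y,y)$-entry is $4-d_x-d_y$, to which one adds $2$ in every entry (from the $(\Delta f(x))^2$ term, a positive rank-one matrix) and, for each triangle $\{x,y,y'\}$ through $x$, an extra $+5$ on the $(y,y)$- and $(y',y')$-entries and $-4$ on the $(y,y')$- and $(y',y)$-entries.

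Next I would read the structure of $G$ off these matrices. If $x$ lies on no triangle, $Q_x$ is a diagonal matrix plus a nonnegative rank-one matrix, hence it is positive semidefinite only when at most one diagonal entry $4-d_x-d_y$ is negative; the resulting conditions force $d_x\le3$, with $d_x=3$ possible only if all neighbours of $x$ are leaves (so that component is $\mathrm{Star}_3$). If $x$ lies on a triangle the $+5/-4$ corrections must be carried along, but the same mechanism applies: a vertex of large degree produces, either at itself or at one of its neighbours, more negative diagonal entries than the single positive rank-one term can absorb; and any vertex $x$ having two neighbours $w_1,w_2$ such that neither $xw_1$ nor $xw_2$ lies on a triangle has a $2\times2$ principal submatrix with diagonal entries $6-d_x-d_{w_1}$, $6-d_x-d_{w_2}$ and off-diagonal entry $2$, which is positive semidefinite only if those diagonal entries are nonnegative and their product is at least $4$, and that is impossible once $d_x\ge4$. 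Playing $Q_x$ against the forms $Q_y$ at the neighbours of $x$ (and dealing with the few leftover configurations, such as the bowtie, by hand) I expect to obtain $\Delta(G)\le3$, together with: each edge lies on at most one triangle, a degree-$2$ vertex off every triangle has both neighbours of degree at most $2$, a vertex on a triangle has at most one neighbour outside that triangle and any such neighbour is a leaf, and $G$ has exactly one triangle.

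With $\Delta(G)\le3$ and these constraints, a short finite analysis finishes the job: the unique triangle of $G$ carries at most one edge leaving it, that edge must end in a leaf, and nothing further can be attached, so $G\in\{C_3,C_3'\}$ (and, in particular, $G$ is finite); along the way the bull, the bowtie, the ``net'', $C_3$ with a pendant path of length two, two triangles joined by an edge, and similar small graphs are each ruled out by pointing to one vertex at which the explicit curvature matrix above fails to be positive semidefinite. The hard part will be the triangle case of the degree bound: for triangle-free vertices a single-negative-eigenvalue count caps the degree immediately, but the $+5/-4$ corrections keep $Q_x$ positive semidefinite over a surprisingly broad range of neighbour degrees, so the bound $\Delta(G)\le3$ has to be wrung out of the interplay between the curvature matrices at $x$ and at its neighbours, with particular care for the configurations in which several neighbours of $x$ themselves lie on triangles (friendship-graph-type pieces). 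The ensuing finite enumeration, while elementary, must likewise be organised so as to be genuinely exhaustive.
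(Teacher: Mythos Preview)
Your overall strategy is sound and your curvature matrix is computed correctly, but your route to Theorem~\ref{thm2.5} is heavier than the paper's. For the non-normalized case the paper does not carry out a direct analysis of the quadratic form $Q_x$ at all; instead it observes (Lemma~\ref{lemma3.2}) that in a $C_4$-free graph any vertex $x$ lying on a triangle with $d_x\ge 3$ has a disconnected punctured ball $\mathring{B}_2(x)$, and then invokes the ready-made structural result of Cushing--Liu--Peyerimhoff (Lemma~\ref{lemma3.1}) to force $d_x\le 3$. That single citation replaces the entire ``hard part'' you flag---the interplay between $Q_x$ and the $Q_y$ at neighbouring vertices and the friendship-type case analysis---and the remaining finite check collapses to the two graphs $C_3$ and $C_3'$ almost immediately.

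What your approach buys is self-containment and uniformity: you never leave the Bochner identity, and exactly this explicit-matrix method is what the paper itself uses for the \emph{normalized} case (Theorem~\ref{thm2.6}), where no analogue of Lemma~\ref{lemma3.1} is quoted. So your plan would work, and indeed gives a single argument covering both Laplacians, at the cost of redoing by hand a degree bound that, in the non-normalized setting, is already available off the shelf. One small inaccuracy to tighten: ``positive semidefinite only when at most one diagonal entry $4-d_x-d_y$ is negative'' is not literally a fact about diagonal-plus-rank-one matrices; the correct necessary condition you want is that every \emph{pairwise sum} of those diagonal entries is nonnegative (test the form on $e_i-e_j$), which is what actually drives the bound $d_x\le 3$.
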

\begin{theorem}\label{thm2.6}
      Let $G$ be an unweighted normalized $C_4$-free graph with $\delta(G)\geq 2$. Then $G$ satisfies the $CD(0, \infty)$ condition if and only if $G$ is one of the following:\\
      (1) $C_n(n \geq 5)$, $P_{\mathbb{Z}}$,\\
      (2) $F_k(1\leq k\leq 7)$, the friendship graph obtained from $k$ triangles by sharing a common vertex.
  \end{theorem}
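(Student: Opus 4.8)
The plan is to split the analysis according to whether $G$ contains a triangle. If $G$ is triangle-free, then its girth is at least $5$ (girth $4$ is impossible, since a shortest cycle of length $4$ would be a forbidden $C_4$-subgraph), so Theorem~\ref{thm2.1} forces $G=P_{\mathbb{Z}}$ or $G=C_n$ for some $n\geq 5$, both on the list. So assume from now on that $G$ contains a triangle. The first step is to observe that then \emph{every} vertex of $G$ lies on a triangle: since $\delta(G)\geq 2$ and $G$ is connected, every vertex lies on a cycle, so $Gir(x)<\infty$ for all $x$; moreover $Gir(x)\neq 4$, because a shortest cycle of length $4$ through $x$ is a $C_4$; and if some $x_0$ satisfied $Gir(x_0)\geq 5$, then Theorem~\ref{thm2.1} would again force $G\in\{P_{\mathbb{Z}}\}\cup\{C_n:n\geq 5\}$, contradicting the presence of a triangle. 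Hence $Gir(x)=3$ for every $x\in V$. We will also repeatedly use three elementary consequences of $C_4$-freeness: for every $x$, the subgraph induced on $N(x)$ contains no $P_3$ and no triangle (either would yield a $C_4$, the triangle through a $K_4$), so $N(x)$ induces a matching and the triangles through $x$ meet pairwise only at $x$; no two distinct triangles of $G$ share an edge (that would be a $K_4-e\supseteq C_4$); and every vertex of $S_2(x)$ has exactly one neighbour in $S_1(x)$.

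The core of the forward direction is the claim that all triangles of $G$ pass through one common vertex $c$. Granting this, every vertex $\neq c$ lies on a triangle, hence on a triangle through $c$, hence is adjacent to $c$; so $V=\{c\}\cup N(c)$, the induced matching on $N(c)$ is perfect (each neighbour of $c$ sits in a triangle with $c$, hence is matched), and there are no further edges, i.e.\ $G=F_k$ with $2k=d_c$; if instead no vertex has degree $\geq 3$, then $G$ is a cycle containing a triangle, i.e.\ $G=C_3=F_1$. Notice how restrictive ``every vertex lies on a triangle'' is: any vertex $v$ of degree $2$ with $N(v)=\{a,b\}$ forces $a\sim b$, so $G$ has no induced paths joining triangles --- the triangles are glued to one another only through single vertices, and connectivity makes this gluing connected.

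To prove the claim we take a vertex $x$ of maximum degree. If $d_x=2$ we are in the cycle case above. If $d_x\geq 3$, let $t\geq 1$ be the number of triangles at $x$; by the matching structure these occupy $2t$ distinct neighbours of $x$ in $t$ vertex-disjoint ``petals''. We aim to show that $d_x=2t$ and every neighbour of $x$ has degree $2$, which, together with the fact that every vertex lies on a triangle and connectivity, identifies $G$ with $F_t$. Each way this can fail attaches extra radial structure to $x$: an ``extra'' neighbour of $x$ lying on no petal (which, being itself on a triangle, produces two triangles joined by an edge, since no two triangles share an edge), or a petal-neighbour of degree $\geq 3$ (which puts a vertex into $S_2(x)$). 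In each such local model we compute $\Gamma_2 f(x)$ from the Bochner identity of Proposition~\ref{prop2.1} --- for the normalized Laplacian $\mathrm{D}_x\equiv 1$, so that $\Gamma_2 f(x)=\tfrac14\left|D^2 f\right|^2(x)+\tfrac12(\Delta f(x))^2-\Gamma f(x)$ --- then minimise out the values of $f$ on $S_2(x)$ (a Schur-complement step, whose optimum sets each $S_2$-value equal to twice the value at its unique neighbour in $S_1(x)$), and exhibit an explicit test function on $B_1(x)$ making the reduced quadratic form negative, contradicting $CD(0,\infty,x)$. Carrying out this finite but delicate case analysis, kept short by the three structural facts above, is the main obstacle of the proof; once it is done, $G=F_k$ as claimed.

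It remains to decide for which $k$ the friendship graph $F_k$ satisfies $CD(0,\infty)$, and to verify the ``if'' direction. For $C_n$ ($n\geq 5$) and $P_{\mathbb{Z}}$ this is contained in Theorems~\ref{thm2.1} and~\ref{thm2.2}. In $F_k$ there are two vertex types. At the centre $c$ (degree $2k$, with $S_2(c)=\emptyset$), the Bochner identity reduces $CD(0,\infty,c)$ to a quadratic form that is a manifest sum of squares, so it holds for every $k$. At a leaf $v$ (degree $2$, with neighbours the centre $c$ and the triangle-partner $v'$), set $f(v)=0$, $f(c)=\gamma$, $f(v')=\beta$; minimising over the $2(k-1)$ vertices of $S_2(v)$ reduces $CD(0,\infty,v)$ to
\[
(8-k)\gamma^{2}-4\gamma\beta+(1+6k)\beta^{2}\ \geq\ 0\qquad\text{for all }\gamma,\beta\in\mathbb{R},
\]
i.e.\ to positive semidefiniteness of $\left(\begin{smallmatrix}8-k&-2\\-2&1+6k\end{smallmatrix}\right)$, equivalently $k\leq 8$ together with $6k^{2}-47k-4\leq 0$, which for positive integers $k$ means exactly $1\leq k\leq 7$. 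This yields the list in item~(2) and completes the classification.
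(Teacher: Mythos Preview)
Your overall strategy coincides with the paper's: split on whether $G$ has a triangle, dispose of the triangle-free case via Theorem~\ref{thm2.1}, and in the triangle case show (i) every edge from a triangle-vertex lies in a triangle, (ii) every neighbour of a high-degree vertex has degree $2$, and then bound the degree of the centre. The paper carries out (i) and (ii) by exhibiting explicit test functions (its Claims~\ref{claim1} and~\ref{claim2}), whereas you correctly identify these as the ``main obstacle'' but do not actually produce the functions. That is the one genuine gap in your write-up: the sentence ``carrying out this finite but delicate case analysis \dots\ is the main obstacle of the proof; once it is done, $G=F_k$'' is precisely where the work lies, and without the concrete choices of $f$ (the paper uses $(a_1,a_2)=(-1,2)$ for (i) and $(c_1,c_2,c_3,c_4)=(-1,-2,1,1)$ for (ii)) the forward direction is only a plan.

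On the other hand, your treatment of the converse is sharper than the paper's. The paper simply cites the curvature calculator to assert $F_k$ satisfies $CD(0,\infty)$ for $1\le k\le7$, and separately bounds $\Delta(G)\le14$ by yet another test function at a leaf. You do both at once by reducing $CD(0,\infty,v)$ at a leaf to positive semidefiniteness of $\begin{pmatrix}8-k & -2\\ -2 & 1+6k\end{pmatrix}$, which is exactly $1\le k\le7$; and your observation that the centre's reduced form is a sum of squares handles that vertex for all $k$. These computations are correct and more informative than the paper's black-box verification. So: fill in the two missing test functions (the paper's choices work, and your structural remarks---$N(x)$ is a matching, degrees in a triangle are even---make the bookkeeping cleaner than the paper's), and your proof will be complete and in places more transparent than the original.
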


\section{Proofs of Theorem \ref{thm2.5}\label{section::3}}
    In this section, we classify all the $C_4$-free unweighted graphs with non-normalized Bakry-\'Emery curvature satisfying the curvature dimension condition $CD(0,\infty)$. All the following lemmas here can only be used to consider the non-normalized situation. We assume that we have chosen a specific connected component of $\mathring{B}_2(x)$. We denote the number of vertices of this connected component in $S_1(x)$ and $S_2(x)$ respectively by $r$ and $s$.

\begin{lemma}[\cite{Cushing}]\label{lemma3.1}
     Assume that $\mathring{B}_2(x)$ has more than one connected component, i.e., $d_x>r$, and that $s=0$ and $d_x \geq 4$. Then $K(G, x;\infty)<0$.
\end{lemma}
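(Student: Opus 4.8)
The plan is to exhibit an explicit test function $f$ on $\mathring{B}_2(x)$ with $\Gamma_2(f)(x)<0$ and $\Gamma(f)(x)>0$; since $K(G,x;\infty)=\inf_f \Gamma_2(f)(x)/\Gamma(f)(x)$ over $f$ with $\Gamma(f)(x)>0$, this forces $K(G,x;\infty)<0$. Throughout I normalize $f(x)=0$ (the curvature depends only on differences) and work from the Bochner identity of Proposition \ref{prop2.1}, which in the non-normalized case ($m\equiv1$, $\mu\equiv1$) reads
$$\Gamma_2(f)(x)=\tfrac14|D^2f|^2(x)+\tfrac12(\Delta f(x))^2-\tfrac14\sum_{y\sim x}(d_x+d_y)\,f(y)^2.$$
The crucial structural observation is that $\Delta f(x)$, $\Gamma(f)(x)$ and the last sum depend only on the values of $f$ on $S_1(x)$, whereas $|D^2f|^2(x)$ is the only term that sees $S_2(x)$. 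Hence I first fix the $S_1$-values and then choose the $S_2$-values to make $|D^2f|^2(x)$ as small as possible.

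For the test function I use the chosen component. Let $A\subseteq S_1(x)$ be its $r$ vertices; since $s=0$, no vertex of $A$ has a neighbour in $S_2(x)$. Let $B=S_1(x)\setminus A$ be the remaining neighbours of $x$, so $|B|=u:=d_x-r\ge 1$ by the hypothesis $d_x>r$. I set $f\equiv 1$ on $A$, $f\equiv t$ on $B$ (a single free parameter $t$), and $f(v)=2t$ for every $v\in S_2(x)$. An $S_2$-vertex contributes to $|D^2f|^2(x)$ only through pairs $(y,v)$ with $y\sim x$, $y\sim v$; such a $y$ lies in $S_1(x)$ and, since $A$ has no $S_2$-neighbour, must lie in $B$ and carry value $t$. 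Thus each such term becomes $|f(x)-2t+2t|^2=0$ and is annihilated, so $|D^2f|^2(x)$ collapses to its radial ($S_1$–$x$) and spherical ($S_1$–$S_1$) parts. A short computation shows the internal spherical edges of $A$ cancel out of the $t$-independent part, so the answer is insensitive to the internal structure of the component.

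Collecting terms gives a quadratic $\Gamma_2(f)(x)=C_0+C_1t+C_2t^2$ with
$$C_0=\tfrac{r}{4}(2r+3-d_x),\qquad C_1=ru,\qquad C_2=\tfrac14\big(u(2u+3-d_x)-Q\big),$$
where $Q\ge0$ is the total number of $S_2$-neighbours of $B$ (which enters only with a favourable sign). It remains to show that some $t$ makes this negative, equivalently that its minimum over $t$ is negative. If $C_2\le0$ this is immediate by sending $t\to\pm\infty$ (or using the positive slope $C_1=ru>0$ when $C_2=0$). If $C_2>0$ the minimum equals $C_0-C_1^2/(4C_2)$; when $C_0\le0$ we are done, and when $C_0>0$ the required inequality $4C_0C_2<C_1^2$ follows, after dropping the helpful $-Q$, from $(2r+3-d_x)(2u+3-d_x)<4ru$. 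Writing $u=d_x-r$, this last inequality simplifies exactly to $d_x^2>9$, i.e. to the hypothesis $d_x\ge4$. Since $\Gamma(f)(x)=\tfrac12(r+ut^2)>0$ for every $t$, the desired test function exists and $K(G,x;\infty)<0$.

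The main obstacle is the $|D^2f|^2(x)$ contribution of the outside neighbours in $B$: if these have rich $2$-sphere neighbourhoods, naively setting $f\equiv0$ on $S_2(x)$ inflates $|D^2f|^2(x)$ and can destroy negativity. The resolution — giving every $S_2$-vertex the value $2t$ so that all its Hessian terms vanish, which is consistent precisely because $A$ contributes no $S_2$-vertex and all of $B$ shares the single value $t$ — is the key point, after which the remaining $2$-sphere data appears only through the nonnegative quantity $Q$, which helps rather than hurts. The only other thing to verify carefully is that the final scalar inequality collapses to $d_x^2>9$; everything else is bookkeeping.
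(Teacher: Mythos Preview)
Your argument is correct. The paper does not actually prove Lemma~\ref{lemma3.1}; it is quoted verbatim from \cite{Cushing} and used as a black box, so there is no in-paper proof to compare against.

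For the record, your computation checks out in detail. With $f\equiv 1$ on the $s=0$ component $A$, $f\equiv t$ on $B=S_1(x)\setminus A$, and $f\equiv 2t$ on $S_2(x)$, the internal edge counts $e_A$ and $e_B$ indeed cancel between the $|D^2f|^2$ term and the $\sum(d_x+d_y)f(y)^2$ term, leaving exactly the coefficients $C_0,C_1,C_2$ you state. In the only nontrivial case $C_0>0$, $C_2>0$ one has $2r+3-d_x>0$ and $2u+3-d_x>0$, so dropping $-Q$ is a legitimate upper bound on $4C_0C_2$, and the residual inequality $(2r+3-d_x)(2u+3-d_x)<4ru$ rewrites via $r+u=d_x$ as $9-(r-u)^2<4ru$, i.e.\ $(r+u)^2>9$, which is precisely $d_x\ge 4$. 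The key device---assigning all of $B$ a single value $t$ so that the blanket choice $f|_{S_2(x)}=2t$ kills every $S_2$ Hessian contribution---is exactly the idea used in \cite{Cushing}, so your route is essentially the intended one.
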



\begin{lemma}\label{lemma3.2}
   Let $G=(V, E)$ be a locally finite $C_4$-free graph. If $x\in V$ is contained in a triangle and $d_x\geq 3$, then $\mathring{B}_2(x)$ has more than one component.
\end{lemma}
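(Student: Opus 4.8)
The plan is to produce two vertices of $\mathring{B}_2(x)$ that lie in different connected components. Since $x$ is contained in a triangle, I would fix $y_1,y_2\in N(x)$ with $y_1\sim y_2$; since $d_x\ge 3$ I can also fix a third neighbour $z\in N(x)\setminus\{y_1,y_2\}$. As the edge $y_1y_2$ is present in $\mathring{B}_2(x)$, the vertices $y_1$ and $y_2$ lie in a common component $C$, and the whole statement reduces to showing $z\notin C$.

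The first ingredient I would isolate is an elementary structural fact about $C_4$-free graphs: for every vertex $u$, the subgraph induced on $N(u)$ has maximum degree at most $1$ (a matching plus isolated vertices). Indeed, if some $a\in N(u)$ had two distinct neighbours $b,c\in N(u)$, then $u\,b\,a\,c\,u$ would be a $C_4$ in $G$. Applied to $u=x$, this immediately gives $z\not\sim y_1$ and $z\not\sim y_2$, and, more importantly, it says that inside $\mathring{B}_2(x)$ the only possible neighbour of $y_1$ within $S_1(x)$ is $y_2$. I would also use repeatedly the defining feature of $\mathring{B}_2(x)$ that it contains no edges lying entirely inside $S_2(x)$.

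Then I would argue by contradiction: assuming $z\in C$, take a shortest path $P=(v_0,\dots,v_k)$ in $\mathring{B}_2(x)$ from $v_0=z$ to $\{y_1,y_2\}$, so that $v_k\in\{y_1,y_2\}$ — say $v_k=y_1$ — while $v_0,\dots,v_{k-1}\notin\{y_1,y_2\}$; minimality together with $z\not\sim y_1$ forces $k\ge 2$. Since $v_{k-1}\sim y_1$ with $v_{k-1}\neq y_2$, the observation above forces $v_{k-1}\in S_2(x)$; since $\mathring{B}_2(x)$ has no edge inside $S_2(x)$ and $v_{k-2}\sim v_{k-1}$, we get $v_{k-2}\in S_1(x)=N(x)$, with $v_{k-2}\notin\{y_1,y_2\}$. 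Now $x\,v_{k-2}\,v_{k-1}\,y_1\,x$ is a closed walk of length $4$: the edges $x v_{k-2}$ and $x y_1$ hold because $v_{k-2},y_1\in N(x)$, and $v_{k-2}v_{k-1},\,v_{k-1}y_1$ are consecutive edges of $P$. Its four vertices are distinct — $x$ is not a vertex of $\mathring{B}_2(x)$ at all, $v_{k-2}\neq v_{k-1}$ and $v_{k-1}\neq y_1$ since $P$ is simple, and $v_{k-2}\neq y_1$. Hence it is a $C_4$ in $G$, a contradiction, so $z$ lies in a component of $\mathring{B}_2(x)$ distinct from $C$.

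The one place requiring a bit of care is exactly the last verification: that this length-$4$ closed walk is an honest cycle, i.e.\ that $x,v_{k-2},v_{k-1},y_1$ are pairwise distinct and all four required edges are present. Both points follow directly from the shortest-path choice combined with the matching structure on $N(x)$, so I do not expect any genuine obstacle beyond this bookkeeping.
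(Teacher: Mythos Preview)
Your proof is correct and follows essentially the same approach as the paper: fix the two triangle neighbours of $x$ and a third neighbour, and use $C_4$-freeness to show the third neighbour lies in a different component of $\mathring{B}_2(x)$. Your shortest-path argument is more explicit than the paper's version, which simply lists the forbidden adjacencies ($y\nsim x_1$, $y\nsim x_2$, $y\nsim u$ for $u\in N(x_1)$, $y'\nsim x_1$, $y'\nsim x_2$ for $y'\in N(y)\setminus\{x\}$) and concludes directly.
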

\begin{proof}
Let $x_1,x_2$ be the neighbours of $x$ such that $x,x_1,x_2$ form a triangle. For each $ u\in N(x_1), v\in N(x_2),y\in N(x)\backslash\{x_1,x_2\}$ and $y'\in N(y)\backslash\{x\}$, it is easy to see that $y\nsim u, y\nsim v,y\nsim x_1,y\nsim x_2,y'\nsim x_1,y'\nsim x_2$ since $G$ is $C_4$-free. Therefore, $y$ and $x_1$ lie in different components.
\end{proof}
\begin{theorem}\label{thm3.1}
Let $G$ be a locally finite connected $C_4$-free graph and $G$ is an unweighted non-normalized Laplacian graph. If $G$ satisfies $CD(0,\infty)$ and contains at least one triangle, then $G\cong C_3$ or $C'_3$ shown in \ref{fig2}.
\end{theorem}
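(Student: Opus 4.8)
Fix a triangle $\{x,x_1,x_2\}\subseteq V(G)$. Since $G$ is $C_4$-free, any two vertices have at most one common neighbour; hence $x$ is the unique common neighbour of $x_1$ and $x_2$, any two triangles sharing a vertex meet only at that vertex and have no edge between their remaining vertices, and no vertex of $S_2(x)$ is adjacent to both $x_1$ and $x_2$. Consequently $\mathring{B}_2(x)$ decomposes so that the two non-$x$ vertices of each triangle through $x$ (together with their radial extensions into $S_2(x)$) form one component, while each non-triangle neighbour of $x$ forms a component of its own; this is exactly the mechanism behind Lemma~\ref{lemma3.2}. Since $G$ is non-normalized, $\mathrm{D}_y=d_y$, so Proposition~\ref{prop2.1} writes $\Gamma_2(f)(x)$ as an explicit quadratic form in the values of $f$ on $B_2(x)$, and $CD(0,\infty,x)$ is precisely the statement that this form is positive semidefinite.

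\textbf{Step 2: pinning down the local degrees (the crux).} The goal is to prove that no vertex lying in a triangle has degree $\ge4$, and that a triangle vertex of degree $3$ has both of its triangle-partners of degree $2$ and its third neighbour a leaf. For a triangle vertex $v$ with $d_v\ge4$, Lemma~\ref{lemma3.2} gives that $\mathring{B}_2(v)$ has at least two components; if one of them contains no vertex of $S_2(v)$ --- e.g.\ when both triangle-partners of $v$ have degree $2$, or when $v$ has a leaf neighbour --- then Lemma~\ref{lemma3.1} forces $K(G,v;\infty)<0$, contradicting $CD(0,\infty)$. For a triangle vertex $v$ of degree $3$, $\mathring{B}_2(v)$ has exactly two components, one carrying the triangle-partners and one carrying the third neighbour; here I compute $f\mapsto\Gamma_2(f)(v)$ from Proposition~\ref{prop2.1} restricted to functions supported on $B_2(v)$ and exhibit an explicit principal submatrix whose determinant is negative whenever a triangle-partner has degree $\ge3$ or the third neighbour has degree $\ge2$, so the form is indefinite and $CD(0,\infty,v)$ fails. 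The degrees then feed back on each other: if $v$ is a triangle vertex of degree $\ge4$ whose triangle-partner $w$ has degree $3$, the degree-$3$ case applied at $w$ forces $d_v=2$, a contradiction; combining this with the two mechanisms above (and a further direct curvature estimate for the residual case in which a triangle has two vertices of degree $\ge4$) rules out degree $\ge4$ entirely. Running these arguments at $v=x$ and $v=x_1,x_2$ pins down every degree in $B_1(x)$.

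\textbf{Step 3: assembling the classification and the converse.} By Step 2, at most one of $x,x_1,x_2$ has a neighbour outside $\{x,x_1,x_2\}$, and if so that neighbour is a leaf; relabel so this vertex is $x$. If $x$ has no outside neighbour, then $d_x=d_{x_1}=d_{x_2}=2$ and $S_2(x)=\emptyset$, so by connectedness $G=B_1(x)\cong C_3$. Otherwise $x$ has a unique leaf neighbour $z$, with $d_x=3$, $d_{x_1}=d_{x_2}=2$, $d_z=1$ and $S_2(x)=\emptyset$, whence $G=B_1(x)\cong C'_3$ (the graph of Figure~\ref{fig2}, a triangle with one pendant vertex). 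For the converse, Proposition~\ref{prop2.1} shows that $f\mapsto\Gamma_2(f)(w)$ is a nonnegative quadratic form at every vertex $w$ of $C_3$ and of $C'_3$ (indeed $K(C_3,\cdot;\infty)=\tfrac52$), so both graphs satisfy $CD(0,\infty)$, which completes the equivalence.

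\textbf{Main obstacle.} Lemmas~\ref{lemma3.1}--\ref{lemma3.2} only handle a high-degree vertex whose punctured $2$-ball is disconnected with an empty $2$-sphere on some component, so the real content is the curvature-form analysis of Step 2: for each of the few remaining local configurations around the triangle one must choose the right function supported near the triangle --- equivalently, the right small principal submatrix of the curvature form --- and check that its determinant is negative uniformly in the free degree parameters. The most delicate piece is the degree-$\ge4$ case not covered directly by Lemma~\ref{lemma3.1}, namely when every component of $\mathring{B}_2(v)$ meets $S_2(v)$, where a hands-on estimate from the Bochner identity seems unavoidable.
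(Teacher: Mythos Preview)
Your strategy shares the paper's skeleton --- work locally at a triangle vertex, invoke Lemma~\ref{lemma3.2} to split $\mathring{B}_2(x)$, and feed the pieces into Lemma~\ref{lemma3.1} --- but you carry it a good deal further than the paper actually does. The paper's own argument is one sentence: for $d_x\ge 3$ it applies Lemmas~\ref{lemma3.1}--\ref{lemma3.2} to conclude ``$d_x=3$ if $s=0$'' and then simply asserts $G\cong C_3'$, and the case $d_x=2$ is reduced to the previous one. In particular the paper never justifies why some component of $\mathring{B}_2(x)$ must have $s=0$ (which is what Lemma~\ref{lemma3.1} needs), nor why $d_x=3$ already forces the exact structure of $C_3'$; your Step~2 is precisely the missing content, and your diagnosis of the ``main obstacle'' --- a triangle vertex of degree $\ge 4$ all of whose $\mathring{B}_2$-components reach $S_2$, so that Lemma~\ref{lemma3.1} is silent --- is on target.

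So rather than a different route, your proposal is a completion of the paper's route: same lemmas, same degree case-split, but supplemented by direct Bochner-form computations (via Proposition~\ref{prop2.1}) that the paper does not perform. What this buys you is an honest handling of the degree-$3$ case (where Lemma~\ref{lemma3.1} never applies, since it requires $d_x\ge 4$) and of the residual degree-$\ge 4$ configurations; what it costs you is that those explicit negative-minor calculations still have to be carried out, and the bootstrapping in Step~2 (using the degree-$3$ conclusion at a partner to kill degree $\ge 4$) only closes once you have actually exhibited the bad test functions in the remaining case where two triangle vertices both have degree $\ge 4$. The paper sidesteps all of this, so if you want a self-contained proof you are right that the hands-on estimate is unavoidable.
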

\begin{proof}
    Consider $x\in V$ which is contained in a triangle. Let $x_1,x_2$ be the neighbours of $x$ such that $x,x_1,x_2$ form a triangle. If $d_x\geq 3$, we know that $d_x= 3$ if $s=0$  by Lemma \ref{lemma3.1} and \ref{lemma3.2}, and $G\cong C_3'$. If $d_x=2$, then $d_{x_1}=d_{x_2}=2$ and $G\cong C_3$ or one of $d_{x_i}$ equals to $3$ which is same as the case $d_x=3$. Both of $C_3$ and $C_3'$ satisfy $CD(0,\infty)$ whose non-normalized Bakry-\'Emery curvature can be calculated by \cite{Liu}.
\end{proof}
Theorem \ref{thm2.5} is true with Theorem \ref{thm2.4} and \ref{thm3.1}.
\section{Proof of Theorem \ref{thm2.6}\label{section::4}}
In this section, we classify all the $C_4$-free graphs  with $\delta(G)\geq 2$ satisfying the curvature dimension condition $CD(0,\infty)$ with unweighted normalized Laplacian.

\begin{theorem}\label{thm4.1}
Let $G$ be a locally finite connected $C_4$-free graph with $\delta(G)\geq 2$ and $G$ is an unweighted normalized Laplacian graph. If $G$ satisfies $CD(0,\infty)$ and contains at least one triangle, then $G\cong F_k (1\leq k\leq 7)$.
\end{theorem}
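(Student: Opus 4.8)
The plan is to argue in two stages: first show that $G$ must be a friendship graph $F_k$, and then run a curvature computation at a degree-$2$ vertex of $F_k$ to force $k\le 7$ (the same computation, performed at the center and at a leaf, simultaneously gives the converse direction needed for Theorem~\ref{thm2.6}).

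For the structural stage the point is that $C_4$-freeness rigidifies the $1$- and $2$-spheres of every vertex: for any $x$, the subgraph induced on $N(x)$ is a matching, whose edges are exactly the triangles through $x$, and every vertex of $S_2(x)$ has a unique neighbour in $S_1(x)$. Hence each component of $\mathring{B}_2(x)$ is either an edge $pq$ of a triangle $\{x,p,q\}$ carrying pendant vertices of $S_2(x)$ along $p$ and along $q$, or a star centered at a non-triangle neighbour $w$ of $x$ with at least one pendant (the latter using $\delta(G)\ge 2$). I would then prove the local statement: if $x$ lies in a triangle and $d_x\ge 3$, then $CD(0,\infty)$ forces every component of $\mathring{B}_2(x)$ to be a bare edge, i.e. $N(x)$ is a perfect matching and $S_2(x)=\emptyset$. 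Granting this, if some triangle-vertex $c$ has $d_c\ge 3$ then connectedness gives $V(G)=B_1(c)$ with induced edges exactly those of $F_{d_c/2}$; and if every triangle-vertex has degree $2$, a triangle is a connected component and $G\cong C_3=F_1$.

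The hard part is the local statement, and the subtlety is that examining $\Gamma_2(\cdot)(x)$ alone does not suffice: for instance a triangle with a pendant path attached has $\Gamma_2(\cdot)(x)\ge 0$ at the triangle vertex $x$ but negative Bakry-\'Emery curvature at the first path vertex, whereas a ``triangular snake'' already fails at a joint vertex. So I would argue by contradiction: assuming a defect in $\mathring{B}_2(x)$ (a pendant, or a non-triangle neighbour), I follow a walk of length at most two from $x$ into the defect and exhibit a vertex $y$ at which, after optimizing the test function over the outermost shell --- a free move, since those values occur only inside $|D^2 f|^2$, each in a single square, with positive coefficient --- $\Gamma_2(\cdot)(y)$ becomes an explicit symmetric quadratic form of size $2$ or $3$ that is not positive semidefinite, contradicting $CD(0,\infty,y)$. $C_4$-freeness keeps the relevant configurations of $\mathring{B}_2$ around $x$ and around such a $y$ finite, so this reduces to a finite --- if somewhat lengthy --- inspection of small matrices; in boundary configurations the discrete-curvature calculator mentioned in the introduction is a useful cross-check. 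This is where I expect the bulk of the work to lie.

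With $G\cong F_k$ in hand, the bound follows from the Bochner identity (Proposition~\ref{prop2.1}). At the center $c$ (of degree $2k$), writing $f(c)=0$ and letting $\alpha_i,\beta_i$ be the values of $f$ on the two non-central vertices of the $i$-th triangle, one computes
$$\Gamma_2(f)(c)=\frac{1}{16k}\sum_i\bigl(5\alpha_i^2-8\alpha_i\beta_i+5\beta_i^2\bigr)+\frac{1}{8k^2}\Bigl(\sum_i(\alpha_i+\beta_i)\Bigr)^2\ \ge\ 0$$
for every $f$ (the first quadratic form has discriminant $64-100<0$), so the center imposes no constraint. At a leaf $v$ (degree $2$, with neighbours the center, value $\gamma$, and its triangle-partner, value $\delta$, and $f(v)=0$), optimizing over the $2k-2$ vertices of $S_2(v)$ leaves $16k\,\Gamma_2(f)(v)=(8-k)\gamma^2-4\gamma\delta+(1+6k)\delta^2$, which is nonnegative for all $\gamma,\delta$ precisely when $8-k\ge 0$ and $(8-k)(1+6k)\ge 4$, equivalently $6k^2-47k-4\le 0$, equivalently $1\le k\le 7$. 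Hence $CD(0,\infty)$ fails on $F_k$ for every $k\ge 8$, which forces $k\le 7$ and completes the proof; the same inequalities show that $F_k$ does satisfy $CD(0,\infty)$ for $1\le k\le 7$.
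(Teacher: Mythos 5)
Your overall architecture matches the paper's: reduce $CD(0,\infty;x)$ via the Bochner identity to a quadratic form on $\mathring{B}_2(x)$, use $C_4$-freeness to optimize away the $S_2$-values (each occurs in a single square with positive coefficient), force the friendship structure, and finish with a computation at a degree-$2$ vertex. Your quantitative half is correct and in fact cleaner than the paper's: I verified your closed forms, in particular $16k\,\Gamma_2(f)(v)=(8-k)\gamma^2-4\gamma\delta+(1+6k)\delta^2$ at a leaf after the $S_2$-optimization, which yields $k\le 7$ as an exact threshold and simultaneously proves that $F_1,\dots,F_7$ satisfy the condition at all vertices; the paper instead rules out only $\Delta(G)\ge 16$ with the ad hoc values $b_1=13$, $b_2=1$ and delegates the verification of $F_k$ for $1\le k\le 7$ to the curvature calculator.

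The genuine gap is the structural stage, which you explicitly defer (``this is where I expect the bulk of the work to lie'') but never carry out. The assertion you need --- that if $x$ lies in a triangle and $d_x\ge 3$ then every component of $\mathring{B}_2(x)$ is a bare matching edge --- is precisely the content of the paper's Claims 1 and 2, and proving it requires exhibiting concrete test functions, not merely asserting that a negative quadratic form exists near each ``defect.'' Your proposed reduction to ``a finite inspection of small matrices'' is not justified as stated: the quadratic forms obtained after optimizing over the outer shell still contain $d_x$ and the degrees $d_y$ of the neighbours as unbounded parameters, so there are infinitely many forms to check, and one must produce test values whose negativity is uniform in these degrees. (The paper does exactly this, choosing $(a_1,a_2)=(-1,2)$ against a non-triangle neighbour --- where it also needs Theorem \ref{thm2.1} to exclude the degenerate case of a degree-$2$ non-triangle neighbour, via $Gir(y)\ge 5$ --- and $(c_1,c_2,c_3,c_4)=(-1,-2,1,1)$ against a triangle neighbour of degree at least $4$, checking that the resulting upper bound is negative for every admissible $d_x$.) Until such uniform test functions are produced, the claim that $G$ must be a friendship graph, and hence the theorem, remains unproven in your write-up.
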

   \begin{proof}
    Using Proposition \ref{prop2.1} for unweighted graph with normalized Laplacian, for any function $f\in C(V)$ and $x \in V$, we have
$$
\begin{aligned}
\Gamma_2(f)(x)  =&\frac{1}{4}\left|D^2 f\right|^2(x)+\frac{1}{2}(\Delta f(x))^2 -\frac{1}{4} \sum_{y \sim x} \frac{\mu_{x y}}{m_x}\left(\mathrm{D}_x+\mathrm{D}_y\right)|f(y)-f(x)|^2\\
 =&\frac{1}{4}\sum_{\substack{y, z \in V \\ y \sim x, z \sim y}} \frac{\mu_{x y} \mu_{y z}}{m_x m_y}|f(x)-2 f(y)+f(z)|^2+\frac{1}{2}(\frac{1}{m_x} \sum_{y \sim x} \mu_{x y}(f(y)-f(x)))^2 \\
& -\frac{1}{4} \sum_{y \sim x} \frac{\mu_{x y}}{m_x}\left(\mathrm{D}_x+\mathrm{D}_y\right)|f(y)-f(x)|^2\\
 =&\frac{1}{4}\sum_{\substack{y, z \in V \\ y \sim x, z \sim y}} \frac{1}{d_x\cdot d_y}|f(x)-2 f(y)+f(z)|^2+\frac{1}{2}(\frac{1}{d_x} \sum_{y \sim x} (f(y)-f(x)))^2 \\
& -\frac{1}{4} \sum_{y \sim x} \frac{2}{d_x}|f(y)-f(x)|^2.
\end{aligned}
$$

Since the terms  $\Gamma(f),\Gamma_2(f)$ and $\Delta f$ are all invariant by adding a constant to $f$,
it suffices to check the curvature dimension conditions at $x\in V$ for functions
$f$ satisfying $f(x)=0$.
$$
\begin{aligned}
    CD(0,\infty;x), x\in V
    \Longleftrightarrow &\Gamma_2(f)(x)\geq 0,\forall f\in C(V)\\
    \Longleftrightarrow &\sum_{\substack{y, z \in V \\ y \sim x, z \sim y}} \frac{d_x}{d_y}|f(x)-2 f(y)
    +f(z)|^2\\
    &+2(\sum_{y \sim x} (f(y)-f(x)))^2-2d_x\sum_{y \sim x} |f(y)-f(x)|^2\geq 0\\
    \Longleftrightarrow &\sum_{\substack{y, z \in V \\ y \sim x, z \sim y}} \frac{d_x}{d_y}|2 f(y)
    -f(z)|^2+2(\sum_{y \sim x} f(y))^2-2d_x\sum_{y \sim x} |f(y)|^2\geq 0.\\
    \Longleftrightarrow &
    \sum_{\substack{y, z \in S_1(x) \\ yz\in E(G)}} \frac{d_x}{d_y}|2 f(y)
    -f(z)|^2+\sum_{\substack{y\in S_1(x),z\in S_2(x) \\ z\sim y}} \frac{d_x}{d_y}|2 f(y)
    -f(z)|^2\\
    &+4\sum_{\substack{y\in S_1(x)}} \frac{d_x}{d_y}f(y)^2+2(\sum_{y \sim x} f(y))^2-2d_x\sum_{y \sim x} |f(y)|^2\geq 0.
\end{aligned}
$$
In fact, for $z\in S_2(x)$, we set $f(z)=2f(y)$ if $z\sim y$ and $y\in S_1(x)$. Since $G$ is $C_4$-free, $z\in S_2(x)$ has only one neighbour $y$ in $S_1(x)$, and  the setting above is independent with the values of $f$ on other vertices.
So we only need to consider the following equivalent condition when $G$ is $C_4$-free:
$$
\begin{aligned}
 \sum_{\substack{y, z \in S_1(x) \\ yz\in E(G)}} \frac{d_x}{d_y}|2 f(y)
    -f(z)|^2+4\sum_{\substack{y\in S_1(x)}} \frac{d_x}{d_y}f(y)^2+2(\sum_{y \sim x} f(y))^2-2d_x\sum_{y \sim x} |f(y)|^2\geq 0.
\end{aligned}
$$

Suppose that $x$ is contained in a triangle. To find all the graphs that satisfy conditions, some claims are given as follows.
\begin{claim}\label{claim1}
    For each $y\in N(x)$, $xy$ is contained in a triangle.
\end{claim}
\begin{proof}
    If there exists a vertex $y$ such that $xy$ does not belong to any triangle, we  know that $d_y\geq3$ since $G$ is $C_4$-free.
    Otherwise $Gir(y)\geq 5$, and $G$ is either the infinite line $P_{\mathbb{Z}}$ or the cycle graphs $C_n$ for $n \geq 5$ by Theorem \ref{thm2.1}, which is impossible. Suppose that $x_1$ and $x_2$ are two neighbours of $x$ which form a triangle with $x$.

Now consider a function $f:N(x)\rightarrow \mathbb{R}$ given by
    $$f(z)= \begin{cases}
0 & \text { if } z\in N(x)\backslash\{x_1,x_2,y\} ; \\
a_1 & \text { if } z\in \{x_1,x_2\}; \\
a_2 & \text { if }  z=y.
\end{cases}$$

$$\begin{aligned}
 &\sum_{\substack{p, z \in S_1(x) \\ pz\in E(G)}} \frac{d_x}{d_p}|2 f(p)
    -f(z)|^2+4\sum_{\substack{y\in S_1(x)}} \frac{d_x}{d_p}f(p)^2+2(\sum_{p \sim x} f(p))^2-2d_x\sum_{p \sim x} |f(p)|^2\\
   \leq  & \frac{d_x}{2}a_1^2+\frac{d_x}{2}a_1^2 +4(d_xa_1^2+\frac{d_x}{3}a_2^2)+2(2a_1+a_2)^2-2d_x(2a_1^2+a_2^2)   \\
   =& (a_1^2-\frac{2}{3}a_2^2)d_x+2(a_1+a_2)^2.
\end{aligned}
$$
Here we choose $a_1=-1$ and $a_2=2$ such that
\begin{equation*}
\left\{
\begin{array}{l}
 a_1^2-\frac{2}{3}a_2^2<0,  \\
3(a_1^2-\frac{2}{3}a_2^2)+2(a_1+a_2)^2<0.
\end{array}
\right.
\end{equation*}

    Furthermore, according to distance partitions from $x$, we know that  each vertex in $G$ is contained in at least one triangle.
\end{proof}

\begin{claim}\label{claim2}
    Suppose that $d_x\geq 4$. Then $d_y=2$ for all $y\sim x$.
\end{claim}
\begin{proof}
    Denote the neighbours of $x$ by $x_i$. Without loss of generality, assume that $x_2$ is the neighbour whose degree is at least $4$.
 Consider a  function $f:N(x)\rightarrow \mathbb{R}$ given by
    $$f(z)= \begin{cases}
0 & \text { if } z\in N(x)\backslash\{x_1,x_2,x_3,x_4\} ; \\
c_1 & \text { if } z=x_1; \\
c_2 & \text { if }  z=x_2; \\
c_3 & \text { if }  z=x_3; \\
c_4 & \text { if }  z=x_4.
\end{cases}$$
Now we need to calculate $\Gamma_2(f)$ and choose proper value of $c_i$ to find a contradiction.
$$
\begin{aligned}
 &\sum_{\substack{y, z \in S_1(x) \\ yz\in E(G)}} \frac{d_x}{d_y}|2 f(y)
    -f(z)|^2+4\sum_{\substack{y\in S_1(x)}} \frac{d_x}{d_y}f(y)^2+2(\sum_{y \sim x} f(y))^2-2d_x\sum_{y \sim x} |f(y)|^2\\
   \leq  &\frac{d_x}{2}(2c_1-c_2)^2+\frac{d_x}{4}(2c_2-c_1)^2+\frac{d_x}{2}(2c_3-c_4)^2+\frac{d_x}{2}(2c_4-c_3)^2\\
   &+2(c_1+c_2+c_3+c_4)^2-c^2_2d_x.
\end{aligned}
$$
Due to  homogeneity of $c_i$ above, without loss of generality, let $c_4=1$. Assume that
$$\frac{1}{2}(2c_1-c_2)^2+\frac{1}{4}(2c_2-c_1)^2+\frac{1}{2}(2c_3-c_4)^2+\frac{1}{2}(2c_4-c_3)^2-c^2_2<0.$$
Consider the right side of the inequality,
$$
\begin{aligned}
    & \frac{d_x}{2}(2c_1-c_2)^2+\frac{d_x}{4}(2c_2-c_1)^2+\frac{d_x}{2}(2c_3-1)^2+\frac{d_x}{2}(2-c_3)^2+2(c_1+c_2+c_3+1)^2-c^2_2d_x\\
   \leq & 2(2c_1-c_2)^2+2(2c_2-c_1)^2+2(2c_3-1)^2+2(2-c_3)^2+2(c_1+c_2+c_3+1)^2-4c^2_2\\
   =& (c_1+2)^2+(c_2+2)^2+6(c_3-1)^2+2(c_2-2c_1)^2+(c_2+c_3)^2+2(c_1+c_3)^2-2.
\end{aligned}$$

Choose $c_i$ such that the inequality above is less than $0$ and the assumption is right. Here we choose $c_1=-1$, $c_2=-2$, $c_3=c_4=1$.
Then
$$\begin{aligned}
 &\sum_{\substack{y, z \in S_1(x) \\ yz\in E(G)}} \frac{d_x}{d_y}|2 f(y)
    -f(z)|^2+4\sum_{\substack{y\in S_1(x)}} \frac{d_x}{d_y}f(y)^2+2(\sum_{y \sim x} f(y))^2-2d_x\sum_{y \sim x} |f(y)|^2\\
   \leq  &  -\frac{3}{4}d_x+2\leq-1.
   \end{aligned}$$
   It is a contradiction with $CD(0,\infty)$. Since $G$ is $C_4$-free, $d_x$ is an even number by Claim \ref{claim1}. Thus $d_y=2$ for all $y\sim x$.
\end{proof}
Now we get a structure of friendship graphs, and what we only need to prove next is $ \Delta(G)\leq 14$. Here we suppose $d_x=2$. $y,z$ is two neighbours of $x$ that $d_z=2$ and $d_y=\Delta(G)\geq 16$.
Consider a  function $f:N(x)\rightarrow \mathbb{R}$ given by
    $$f(p)= \begin{cases}
b_1 & \text { if } p=y; \\
b_2 & \text { if }  p=z.
\end{cases}$$
$$\begin{aligned}
 &\sum_{\substack{y, z \in S_1(x) \\ yz\in E(G)}} \frac{d_x}{d_y}|2 f(y)
    -f(z)|^2+4\sum_{\substack{y\in S_1(x)}} \frac{d_x}{d_y}f(y)^2+2(\sum_{y \sim x} f(y))^2-2d_x\sum_{y \sim x} |f(y)|^2\\
   \leq  &  \frac{49}{8}b^2_2-\frac{1}{2}b_1b_2=(\frac{49}{8}b_2-\frac{1}{2}b_1)\cdot b_2.
   \end{aligned}$$
Choose $b_2=1$ and $b_1=13$, then $$\frac{49}{8}b^2_2-\frac{1}{2}b_1b_2<0.$$
A contradiction with $CD(0,\infty)$. Thus $\Delta(G)\leq 15$. By Claim \ref{claim1}, $\Delta(G)\neq 15$, so $\Delta(G)\leq 14$. And we can calculate normalized Bakry-\'Emery curvature of $F_k$ $(1\leq k\leq 7)$ by \cite{Liu}, which tells us that all of them satisfy $CD(0,\infty)$.
\end{proof}

Theorem \ref{thm2.6} can be got immediately by Theorem \ref{thm2.2} and \ref{thm4.1}.

\section{Concluding remark}\label{section::5}
We need  a condition $\delta(G)\geq2$ to complete our proof on unweighted normalized Laplacian. Actually, it's such a weak condition that we can remove it and complete the whole classification, which is a easy but cumbersome work based on our proof. There is a similar question on triangle-free graph. However, it seems to be impossible to classify all of triangle-free graph satisfying $CD(0,\infty)$ since we can find a lot of examples with unweighted normalized and non-normalized Laplacian by \cite{Liu}. In \cite{KK}, they calculate non-normalized Bakry-\'Emery curvature of hypercube, which is also a regular graph. It is clear that a regular graph satisfies $CD(0,\infty)$ with non-normalized Laplacian if and only if it satisfies $CD(0,\infty)$ with normalized Laplacian. Taking a step back, we want to know what properties and structures triangle-free graphs with non-negative Bakry-\'Emery curvature have. Using the curvature calculator, we find that many of these graphs consist of 4-cycle. Therefore, we pose the following conjecture:
\begin{conjecture}
   Triangle-free graphs satisfying $CD(0,\infty)$ with unweighted normalized and non-normalized Laplacian contain no induced $k$-cycle $(k\geq 5)$, except $C_n(n\geq5)$ and several graphs.
\end{conjecture}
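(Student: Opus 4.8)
The plan is to follow the scheme that produced Theorems~\ref{thm2.5} and~\ref{thm2.6}: translate $CD(0,\infty)$ at a vertex into a finite quadratic inequality via the Bochner identity of Proposition~\ref{prop2.1}, feed it carefully chosen test functions to bound degrees, propagate the resulting local structure along a distance partition from $x$, and finish with a finite verification using the curvature calculator of~\cite{Liu}. The first reduction is to girth $4$: if $G$ is triangle-free and contains an induced $C_k$ with $k\ge 5$, then either $Gir(G)\ge 5$, in which case Theorems~\ref{thm2.2} and~\ref{thm2.4} list every $CD(0,\infty)$ graph and $C_n$ $(n\ge 5)$ is the only one among them containing an induced cycle of length at least $5$; or $Gir(G)=4$, and then it remains to classify triangle-free girth-$4$ graphs satisfying $CD(0,\infty)$, for both Laplacians, that contain an induced $C_k$, $k\ge 5$. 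It should be noted at the outset that, as recorded in Section~\ref{section::5}, for regular graphs the normalized and non-normalized $CD(0,\infty)$ conditions coincide, and the hypercubes $Q_n$ $(n\ge 3)$ are triangle-free, have girth $4$, contain an induced $C_6$, and satisfy $CD(0,\infty)$ (indeed with curvature $2$); hence the family $\{Q_n\}$ unavoidably belongs to the ``several graphs'', and an essential part of the task is to decide which other graphs occur, and whether only finitely many of them occur beyond $\{Q_n\}$.

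For the local analysis, fix a vertex $x$ and carry out the simplification used in the proof of Theorem~\ref{thm4.1}. As $G$ is triangle-free, $S_1(x)$ is an independent set, so the $|D^2f|^2$ term in Proposition~\ref{prop2.1} reduces to the ``return'' part $\sum_{y\in S_1(x)}(2f(y))^2$ coming from $z=x$, together with the contributions of edges from $S_1(x)$ to $S_2(x)$. For $z\in S_2(x)$ put $m(z)=|N(z)\cap S_1(x)|$; when $m(z)=1$ the corresponding term is annihilated by taking $f(z)=2f(y)$ as in Theorem~\ref{thm4.1}, while when $m(z)\ge 2$ --- that is, when $z$ completes a $4$-cycle through $x$ --- optimising over $f(z)$ leaves a nonnegative residual of the form $4\sum_{y}f(y)^2-\tfrac{4}{m(z)}\bigl(\sum_{y}f(y)\bigr)^2$ summed over $y\in N(z)\cap S_1(x)$. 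Thus $CD(0,\infty)$ at $x$ becomes the requirement that, for every $f$ on $S_1(x)$, the return term plus the sum of these $4$-cycle residuals plus $2\bigl(\sum_{y\sim x}f(y)\bigr)^2$ minus a weighted diagonal term be nonnegative. I would then feed this two families of test functions: ones supported on a single vertex or a single $S_1$--$S_2$ edge, to force an absolute degree bound $\Delta(G)\le C$ (the analogue of Claim~\ref{claim2}); and ones supported on the neighbourhood of a vertex lying on the induced $C_k$, whose two cycle-neighbours are non-adjacent, to bound how many common neighbours two vertices may share and hence how $4$-cycles may cluster along the long cycle.

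With all degrees bounded and girth equal to $4$, the remaining step is propagation: starting from the induced $C_k$ and working outward through the distance partition $S_1(x),S_2(x),\ldots$, one uses the degree bound together with the $4$-cycle-clustering restrictions to force the local picture around each newly reached vertex, so that $G$ is either finite of bounded order or one of finitely many explicit infinite families (the hypercubes, and whatever else survives). A check of the finitely many bounded-order cases with~\cite{Liu} then pins down the exceptional list and turns ``several graphs'' into a precise statement.

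The main obstacle is precisely this last structural step, and above all the finiteness claim beyond $\{Q_n\}$. The tension is intrinsic: an induced long cycle wants local sparseness --- few short cycles through its vertices --- whereas even a single $C_4$ wants local density, and converting this into a usable inequality through the $4$-cycle residual terms is delicate; it is all the more so for the normalized Laplacian, where the weights $d_x/d_y$ survive in the reduced condition and cannot simply be cleared the way the $S_2$ contribution was in the $C_4$-free setting of Theorem~\ref{thm4.1}. Establishing that the list of exceptions is finite modulo the hypercube family --- rather than merely obtaining a bounded-degree structure theorem --- is where the bulk of the work would lie, and it is conceivable that further infinite families (for instance among Cartesian products of cycles, or prisms) would have to be identified first.
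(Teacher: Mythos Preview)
The statement is a \emph{conjecture}: the paper offers no proof, only computational motivation via the curvature calculator. So there is nothing in the paper to compare your argument against, and what you have written is, by your own admission, a strategy with unresolved obstacles rather than a proof.

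That said, there is a concrete internal inconsistency in the plan that deserves to be named. You propose to obtain, via test functions, an absolute degree bound $\Delta(G)\le C$ as ``the analogue of Claim~\ref{claim2}'', and then to propagate. But you have already observed that the hypercubes $Q_n$ are triangle-free, satisfy $CD(0,\infty)$, and contain an induced $C_6$; since $Q_n$ is $n$-regular, no uniform degree bound can possibly come out of the $CD(0,\infty)$ inequality in the triangle-free setting. The analogue of Claim~\ref{claim2} therefore fails outright here, not merely ``is delicate''. Any workable plan must treat the degree as a free parameter and seek structural constraints that scale with it, rather than a fixed cap followed by a finite search; this is qualitatively different from the $C_4$-free argument in Section~\ref{section::4}, where the $S_2$ contribution could be killed and the problem really did reduce to a finite one.

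A second issue is the phrase ``several graphs'' itself. Beyond $\{Q_n\}$, Cartesian products of even cycles and of paths (more generally, many abelian Cayley graphs covered by~\cite{KK}) are triangle-free, satisfy $CD(0,\infty)$, and contain long induced cycles; your closing sentence anticipates this, but it means that establishing finiteness of the exceptional list modulo any explicit family is likely not just ``where the bulk of the work would lie'' but may be false as stated. In short: the reduction to girth~$4$ via Theorems~\ref{thm2.2} and~\ref{thm2.4} is correct, and the Bochner-plus-test-function framework is the right language, but the specific plan (uniform degree bound, then finite enumeration) cannot succeed, and the conjecture itself probably needs to be reformulated before it can be attacked.
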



\end{document}